\documentclass[12pt,reqno]{amsart}

\usepackage[usenames]{color}
\usepackage{amssymb}
\usepackage{graphicx}
\usepackage{amscd}
\usepackage{url}
\usepackage[colorlinks=true,
linkcolor=webgreen,
filecolor=webbrown,
citecolor=webgreen]{hyperref}

\definecolor{webgreen}{rgb}{0,.5,0}
\definecolor{webbrown}{rgb}{.6,0,0}

\usepackage{color}
\usepackage{fullpage}
\usepackage{float}
\usepackage{comment}
\usepackage{cite}
\usepackage{url}
\usepackage{graphics,amsmath,amssymb}
\usepackage{amsthm}
\usepackage{amsfonts}
\usepackage{latexsym}
\usepackage{enumerate}

\setlength{\textwidth}{6.5in}
\setlength{\oddsidemargin}{.1in}
\setlength{\evensidemargin}{.1in}
\setlength{\topmargin}{-.5in}
\setlength{\textheight}{8.9in}

\theoremstyle{plain}\numberwithin{equation}{section}
\newtheorem{theorem}{Theorem}[section]
\newtheorem{corollary}[theorem]{Corollary}
\newtheorem{lemma}[theorem]{Lemma}
\newtheorem{proposition}[theorem]{Proposition}

\theoremstyle{remark}

\allowdisplaybreaks
\newcommand{\Ft}[1]{\mathcal{F}_{#1}}
\newcommand{\Lt}[1]{\mathcal{L}_{#1}}

\newcommand{\lc}{{\rm lc}}

\begin{document}
	
\title[Irreducibility of generalized Fibonacci polynomials]
       {Irreducibility of generalized Fibonacci polynomials}

\author[Rigoberto Fl\'orez]{Rigoberto Fl\'orez }
\address{Department of Mathematical Sciences\\ The Citadel\\ Charleston, SC \\U.S.A}
\email{rigo.florez@citadel.edu}
\thanks{}

\author[J.~C.~Saunders]{J.~C.~Saunders}
\address{Department of Mathematics and Statistics\\ University of Calgary\\AB T2N 1N4\\Canada}
\email{john.saunders1@ucalgary.ca}
\thanks{}

\begin{abstract}
A second order polynomial sequence is of \emph{Fibonacci-type $\Ft{n}$} (\emph{Lucas-type} $\Lt{n}$) if its Binet formula has a structure similar to 
that for Fibonacci (Lucas) numbers. Under certain conditions these polynomials are irreducible if and only if $n$ is a prime number. For example, the 
Fibonacci polynomials, Pell polynomials, Fermat polynomials, Lucas polynomials, Pell-Lucas polynomials, Fermat-Lucas polynomials are irreducible  
when $n$ is a prime number; and  Chebyshev polynomials (second kind), Morgan-Voyce polynomials (Fibonacci type), and Vieta polynomials 
are reducible when $n$ is a prime number. 

In this paper we give some theorems to determine whether the Fibonacci type polynomials and Lucas type polynomials are irreducible when $n$ is prime. 

\end{abstract}

\keywords{Irreducible polynomial, Prime number, Extension field, Fibonacci polynomial, Lucas polynomial.}

\maketitle

\section{Introduction}
The Fibonacci polynomials $F_{n}$ are defined as $F_{n}(x) = x F_{n - 1}(x) + F_{n - 2}(x)$ where $F_{0}(x)=0$ and $F_{1}(x)=1$. Webb et al. 
\cite{WebbParberry} proved that  $F_{p}$ is irreducible if and only if $p$ is a prime number.  Hogatt et al. \cite{HoggattLong} defined a bivariate 
generalized Fibonacci polynomial  $u_n(x,y)$ and proved that $u_p(x,y)$ is irreducible over $\mathbb{Q}$ if and only if $p$ is a prime number.

The Lucas polynomials $L_{n}$ are defined as $L_{n}(x) = x L_{n - 1}(x) + L_{n - 2}(x)$ where $L_{0}(x)=2$ and $L_{1}(x)=x$.
Bergum and Hoggatt \cite{HoggattBergum} proved that  $L_{p}(x)/L_{1}(x)$ is irreducible if and only if $p>2$ is a prime number. They also defined a bivariate  
generalized Lucas polynomial  $v_n(x,y)$ and proved that $v_p(x,y)/v_1(x,y)$ is irreducible over $\mathbb{Q}$ if and only if $p>2$  is a prime number.

A second order polynomial sequence is of \emph{Fibonacci-type} (\emph{Lucas-type}) if its Binet formula
has a structure similar to that for Fibonacci (Lucas) numbers. Those are known as \emph{generalized Fibonacci polynomials}  GFP
(see  \cite{florezHiguitaAnotherProof, florezHiguitaRamirez,florezHiguitaMuk2018, florezHiguitaMuk:StarD,FlorezMcAnallyMuk}). Some known examples are: 
Pell polynomials, Fermat polynomials, Chebyshev polynomials, Morgan-Voyce polynomials, Lucas polynomials, Pell-Lucas polynomials,  
Fermat-Lucas polynomials, Chebyshev polynomials, Vieta polynomials and Vieta-Lucas polynomials. Other generalized Fibonacci polynomials are in 
  \cite{Richard,HoggattBergum, HoggattLong}.

From the discussion in the first two paragraphs above, we have two natural questions, is it true that $\Ft{p}(x)$ is irreducible if and only if $p$  
is a prime number?   And, is it true that $\Lt{p}(x)/\Lt{1}(x)$ is irreducible if and only if $p>2$ is a prime number? In this paper we give precise conditions  
to determine whether some families of GFP are irreducible when $p$ is a prime number and give precise conditions to determine whether some families 
of GFP are reducible when $p$ is a prime number.  As a corollary of the  theorems proved here, we obtain that the Fibonacci polynomials, the  
Pell polynomials, the Fermat polynomials, are irreducible  when $p>0$ is a prime number. A second corollary is that Chebyshev polynomials  
(second kind), Morgan-Voyce polynomials (Fibonacci type), and Vieta polynomials are reducible when $p$ is a  prime number. As a third  corollary we have that  
$\Lt{p}(x)/\Lt{1}(x)$ is irreducible where $p>2$ is a prime number  and  
$\Lt{p}(x)$ is one of these: Lucas polynomials, Pell-Lucas polynomials, Fermat-Lucas polynomials.    
		
\section{Second order polynomial sequences} 
In this section we reproduce the definitions by Fl\'orez et. al. 
\cite{florezHiguitaAnotherProof, florezHiguitaRamirez,florezHiguitaMuk2018, florezHiguitaMuk:StarD,FlorezMcAnallyMuk}  
for generalized Fibonacci polynomials. The definitions here give rise to some known polynomial sequences (see for example, Table \ref{familiarfibonacci} or  
\cite{florezHiguitaAnotherProof, florezHiguitaRamirez,florezHiguitaMuk2018, florezHiguitaMuk:StarD,FlorezMcAnallyMuk,HoggattLong, Pell, Fermat,  koshy}).
Throughout the paper we consider polynomials in $\mathbb{Q}[x]$  or in $\mathbb{Z}[x]$. 

We now give the two second order polynomial recurrence relations in which we divide the generalized Fibonacci polynomials (GFP):
\begin{equation}\label{Fibonacci;general:FT}
\Ft{0}(x)=0, \; \Ft{1}(x)= 1,\;  \text{and} \;  \Ft{n}(x)= d(x) \Ft{n - 1}(x) + g(x) \Ft{n - 2}(x) \text{ for } n\ge 2,
\end{equation}
where $d(x)$, and $g(x)$ are fixed non-zero polynomials in $\mathbb{Z}[x]$ with $\gcd(d(x), g(x))=1$.

We say that a polynomial recurrence relation is of \emph{Fibonacci-type} if it satisfies the relation given in
\eqref{Fibonacci;general:FT}, and of \emph{ Lucas-type} if:
\begin{equation}\label{Fibonacci;general:LT}
\Lt{0}(x)=p_{0}, \; \Lt{1}(x)= p_{1}(x),\;  \text{and} \;  \Lt{n}(x)= d(x) \Lt{n - 1}(x) + g(x) \Lt{n - 2}(x) \text{ for } n\ge 2,
\end{equation}
where $|p_{0}|=1$ or $2$ and $p_{1}(x)$, $d(x)=\alpha p_{1}(x)$, and $g(x)$ are fixed non-zero  polynomials in $\mathbb{Z}[x]$ with
$\alpha$ an integer of the form $2/p_{0}$.  Some known examples
of Fibonacci-type polynomials and  Lucas-type polynomials are in Table \ref{familiarfibonacci} or in
\cite{florezHiguitaAnotherProof, florezHiguitaRamirez,florezHiguitaMuk2018, florezHiguitaMuk:StarD,FlorezMcAnallyMuk,HoggattLong,Pell, Fermat,  koshy}.

If $G_{n}$ is either $\Ft{n}$ or $\Lt{n}$ for all $n\ge 0$ and $d^2(x)+4g(x)> 0$, then the explicit formula for the recurrence relations in
 \eqref{Fibonacci;general:FT} and \eqref{Fibonacci;general:LT}  is given by
\begin{equation*}
 G_{n}(x) = t_1 a^{n}(x) + t_2 b^{n}(x)
\end{equation*}
where $a(x)$ and $b(x)$ are the solutions of the quadratic equation associated with the second order
recurrence relation $G_{n}(x)$. That is,  $a(x)$ and $b(x)$ are the solutions of $z^2-d(x)z-g(x)=0$.
If $\alpha=2/p_{0}$, then the Binet formula for Fibonacci-type polynomials is stated in  \eqref{bineformulauno} and the Binet formula
for  Lucas-type polynomials is stated in \eqref{bineformulados}
(for details on the construction of the two Binet formulas see \cite{florezHiguitaMuk2018})
\begin{equation}\label{bineformulauno}
\Ft{n}(x) =\dfrac{a^{n}(x)-b^{n}(x)}{a(x)-b(x)}
\end{equation}
and
\begin{equation}\label{bineformulados}
\Lt{n}(x)=\dfrac{a^{n}(x)+b^{n}(x)}{\alpha}.
\end{equation}
Since $a(x)$ and $b(x)$ are solutions of  $z^2-d(x)z-g(x)=0$, we have 
$$a(x)+b(x)=d(x), \quad a(x)b(x)= -g(x), \quad \text{ and } \quad a(x)-b(x)=\sqrt{d^2(x)+4g(x)}$$
where $d(x)$ and $g(x)$ are the polynomials defined in \eqref{Fibonacci;general:FT} and \eqref{Fibonacci;general:LT}.
These give that 

\begin{equation}\label{eqn1}
a(x)=\frac{d(x)+\sqrt{d^2(x)+4g(x)}}{2} \quad \text{ and } b(x)=\frac{d(x)-\sqrt{d^2(x)+4g(x)}}{2}.
\end{equation}

A sequence of  Lucas-type (Fibonacci-type) is \emph{equivalent} or \emph{conjugate} to a sequence of Fibonacci-type (Lucas-type),
if their recursive sequences are determined by the same polynomials $d(x)$ and $g(x)$. Notice that two equivalent polynomials
have the same $a(x)$ and $b(x)$ in their Binet representations. In \cite{florezHiguitaMuk2018, florezHiguitaMuk:StarD,koshy,koshyPoly}
there are examples of some known equivalent polynomials with their Binet formulas.  The  polynomials in Tables \ref{familiarfibonacci}  and \ref{Equivalent} are
organized by pairs of equivalent polynomials.  For instance,  Fibonacci and Lucas, Pell and Pell-Lucas, and so on.

We use $\deg(P)$ and $\lc(P)$ to mean the degree and the leading  coefficient of a polynomial  $P$.
Most of the following conditions were required in the papers that we are citing. Therefore, we require here that
$\gcd(d(x), g(x))=1$ and $\deg(g(x))< \deg(d(x))$ for both types of sequences
 and  that conditions in \eqref{extra:condition} also hold for Lucas types polynomials;
 \begin{equation}\label{extra:condition}
 \gcd(p_{0}, p_{1}(x))=1,  \gcd(p_{0}, d(x))=1, \gcd(p_{0}, g(x))=1,  \text{ and  that degree of } \Lt{1} \ge1.
 \end{equation}

Notice that in the definition of Pell-Lucas we have
$Q_{0}(x)=2$ and $Q_{1}(x)=2x$. Thus, the $\gcd(2,2x)=2\ne 1$.
Therefore, Pell-Lucas does not satisfy the extra conditions that we imposed in \eqref{extra:condition}. So,
to resolve this inconsistency we use $Q^{\prime}_{n}(x)=Q_{n}(x)/2$ instead of $Q_{n}(x)$.
	
\begin{table}[!ht]
		\begin{center}\scalebox{0.8}{
				\begin{tabular}{|l|l|l|l|} \hline
					Polynomial                      & Initial value     & Initial value	& Recursive Formula 						       \\	
					&$G_0(x)=p_0(x)$  	      &$G_1(x)=p_1(x)$	&$G_{n}(x)= d(x) G_{n - 1}(x) + g(x) G_{n - 2}(x)$ 	   \\  \hline   \hline
					Fibonacci             	      & $0$	     &$1$      	&$F_{n}(x) = x F_{n - 1}(x) + F_{n - 2}(x)$	 	       \\
					Lucas 	             	      &$2$	     & $x$ 	 	&$D_n(x)= x D_{n - 1}(x) + D_{n - 2}(x)$                \\ 						
					Pell			    	      &$0$	     & $1$           &$P_n(x)= 2x P_{n - 1}(x) + P_{n - 2}(x)$               \\
					Pell-Lucas 	    	      &$2$	     &$2x$          &$Q_n(x)= 2x Q_{n - 1}(x) + Q_{n - 2}(x)$               \\
					Pell-Lucas-prime 	      &$1$	     &$x$       	&$Q_n^{\prime}(x)= 2x Q_{n - 1}^{\prime}(x) + Q_{n - 2}^{\prime}(x)$               \\
					Fermat  	                       &$0$	     & $1$      	&$\Phi_n(x)= 3x\Phi_{n-1}(x)-2\Phi_{n-2}(x) $           \\
					Fermat-Lucas	              &$2$	     &$3x$  	&$\vartheta_n(x)=3x\vartheta_{n-1}(x)-2\vartheta_{n-2}(x)$\\
					Chebyshev second kind &$0$	     &$1$       	&$U_n(x)= 2x U_{n-1}(x)-U_{n-2}(x)$  	 	              \\
					Chebyshev first kind       &$1$	     &$x$       	&$T_n(x)= 2x T_{n-1}(x)-T_{n-2}(x)$  \\	 	
					Morgan-Voyce	              &$0$	     &$1$      	&$B_n(x)= (x+2) B_{n-1}(x)-B_{n-2}(x) $  	 	         \\
					Morgan-Voyce 	              &$2$	     &$x+2$       &$C_n(x)= (x+2) C_{n-1}(x)-C_{n-2}(x)$  	 	         \\
					Vieta 		              &$0$ 	     &$1$	        &$V_n(x)=x V_{n-1}(x)-V_{n-2}(x)$ 	    \\
					Vieta-Lucas 		      &$2$ 	     &$x$	        &$v_n(x)=x v_{n-1}(x)-v_{n-2}(x)$      \\
					\hline
			\end{tabular}}
		\end{center}
		\caption{Recurrence relation of some GFP.} \label{familiarfibonacci}
	\end{table}

\begin{table} [h]
\begin{center}\scalebox{0.8}{
\begin{tabular}{|lc|lc|l|l|} \hline
   Polynomial  	    & $\Lt{n}(x)$    	     &Polynomial of 	&$\Ft{n}(x)$   &$a(x)$ 	              	& $b(x)$			\\	
   Lucas type 	 	& 			     & Fibonacci type  	&			& 					        &      				\\ \hline \hline
   Lucas 			&$D_n(x)$     	     &Fibonacci 		&$F_n(x)$ 	&  $(x+\sqrt{x^2+4})/2$     & $(x-\sqrt{x^2+4})/2$ \\ 						
   Pell-Lucas 		&$Q_n(x)$	 	     &Pell				& $P_n(x)$  &  $x+\sqrt{x^2+1}$	      	& $x-\sqrt{x^2+1}$        \\
   Fermat-Lucas 	& $\vartheta_n(x)$	 & Fermat 			& $\Phi_n(x)$ &  $(3x+\sqrt{9x^2-8})/2$ & $(3x-\sqrt{9x^2-8})/ 2$ \\
   Chebyshev first kind& $T_n(x)$ 		 &Chebyshev second kind&$U_n(x)$ &  $x +\sqrt{x^2-1}$      	& $x -\sqrt{x^2-1}$       \\
   Morgan-Voyce 	&$C_n(x)$ 	   	     &Morgan-Voyce	    & $B_n(x)$ 	 &  $(x+2+\sqrt{x^2+4x})/2$ & $(x+2-\sqrt{x^2+4x})/2$  \\
   Vieta-Lucas 		&$v_n(x)$ 	   	     &Vieta	            & $V_n(x)$ 	 &  $(x+\sqrt{x^2-4})/2$    & $(x-\sqrt{x^2-4})/2$     \\   \hline
\end{tabular}}
\end{center}
\caption{$\Lt{n}(x)$ and its conjugate $\Ft{n}(x)$.} \label{Equivalent}
\end{table}

For the rest of this paper we suppose that $\deg\left(d \right)>\deg\left(g \right)$. For instance, the familiar examples in Tables \ref{familiarfibonacci} and \ref{Equivalent}  
satisfy this condition. Notice that Jacobsthal and Jacobsthal-Lucas polynomials defined as 
$j_{n}(x)= j_{n-1}(x)+2x j_{n-1}(x)$ are GFP but they do not satisfy the mentioned condition. So, we do not study those polynomials here in this paper. 

\section{Fibonacci type polynomial irreducibility} \label{FibonacciSection}

In this section we discuss the irreducibility and reducibility of GFP of Fibonacci type. In particular, we give a complete classification (reducible and irreducible) for the familiar  
polynomials of Fibonacci type given in Table \ref{familiarfibonacci}. In the end of the section we give a more general theorem to determine whether a GFP of Fibonacci type is irreducible. 

The following lemma generalizes \cite[Lemma 5]{HoggattLong}. The proof can be done by induction, so we omit it. 

\begin{lemma}\label{genHogattLemma}
$\Ft{n}(x)= \sum _{i=0}^{\lfloor \frac{n-1}{2}\rfloor } \binom{n-i-1}{i} d(x)^{n-2 i-1}g(x)^i $.
\end{lemma}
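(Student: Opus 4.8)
The plan is to prove the identity $\Ft{n}(x)= \sum_{i=0}^{\lfloor (n-1)/2\rfloor} \binom{n-i-1}{i} d(x)^{n-2i-1}g(x)^i$ by induction on $n$, since the recurrence \eqref{Fibonacci;general:FT} is second order. First I would verify the base cases $n=1$ and $n=2$: for $n=1$ the right-hand side has only the $i=0$ term, which is $\binom{0}{0}d(x)^0 g(x)^0 = 1 = \Ft{1}(x)$; for $n=2$ the upper limit $\lfloor 1/2\rfloor = 0$ gives again only the $i=0$ term, $\binom{1}{0}d(x)^1 = d(x) = \Ft{2}(x)$, as required. These two cases anchor the strong induction.

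For the inductive step, assume the formula holds for $n-1$ and $n-2$ (with $n\ge 3$), and compute $\Ft{n}(x) = d(x)\Ft{n-1}(x) + g(x)\Ft{n-2}(x)$ by substituting the two sums. The term $d(x)\Ft{n-1}(x)$ contributes $\sum_i \binom{n-i-2}{i} d(x)^{n-2i-1} g(x)^i$, and $g(x)\Ft{n-2}(x)$ contributes $\sum_j \binom{n-j-3}{j} d(x)^{n-2j-3} g(x)^{j+1}$; reindexing the latter with $i = j+1$ turns it into $\sum_i \binom{n-i-2}{i-1} d(x)^{n-2i-1} g(x)^i$. Adding the two, the coefficient of $d(x)^{n-2i-1}g(x)^i$ becomes $\binom{n-i-2}{i} + \binom{n-i-2}{i-1} = \binom{n-i-1}{i}$ by Pascal's rule, which is exactly the claimed coefficient.

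The one genuinely fiddly point — and the step I would be most careful about — is bookkeeping at the ends of the summation ranges, i.e. checking that the index sets match up after reindexing and that no spurious or missing terms appear depending on the parity of $n$. Concretely, one must confirm that the top term of the $d(x)\Ft{n-1}(x)$ sum and the top term of the reindexed $g(x)\Ft{n-2}(x)$ sum combine to give precisely the $i=\lfloor (n-1)/2\rfloor$ term of $\Ft{n}(x)$, handling separately the cases $n$ even and $n$ odd; in the boundary cases one uses the conventions $\binom{m}{k}=0$ for $k<0$ or $k>m$ so that Pascal's rule still applies formally. Once this parity check is done, the identity follows. Since the authors explicitly say "the proof can be done by induction, so we omit it," I would present only this skeleton and leave the parity-boundary verification to the reader.
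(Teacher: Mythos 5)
Your induction is correct: the base cases, the Pascal's-rule recombination of $d(x)\Ft{n-1}(x)$ and the reindexed $g(x)\Ft{n-2}(x)$, and the parity-dependent boundary bookkeeping (with the convention $\binom{m}{k}=0$ for $k<0$ or $k>m$) all check out. This is exactly the induction the paper alludes to when it writes that ``the proof can be done by induction, so we omit it,'' so your argument matches the intended approach.
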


Hogatt et al. \cite{HoggattLong} defined the bivariate  generalized Fibonacci polynomial  
$$u_n(x,y)=xu_{n-1}(x,y) +y u_n(x,y)\quad \text{ with } u_0(x,y)=0 \quad \text{ and } \quad  u_n(x,y)=1.$$ 
In their version of Lemma \ref{genHogattLemma} for $u_n(x,y)$ it holds that $u_n(x,y^2)$ is a homogeneous polynomial. 
Webb et al. \cite{WebbParberry} proved that $u_p(x,1)$ is irreducible over $\mathbb{Q}$ if and only if $p$ is a prime number.  These two results were used in 
\cite{HoggattLong} to prove that $u_p(x,y)$ is irreducible over $\mathbb{Q}$ if and only if $p$ is a prime number.  However, we need some  caution on the 
interpretation of these results. For example, in the result proved by Webb we cannot substitute $x$ by any polynomial.  Thus, if instead of $x$ 
we take $x^3$ we obtain that  $u_3(x^3,1)=(x+1)(x^2-x+1)$, so this new polynomial is reducible. Similarly, we can construct  examples to  
show that  $u_p(x,y)$ is not always irreducible for every prime and for every choice of $y$. For instance, if instead of $y$ we take $-y^{2k}$,  
it holds that  $u_p(x,-y^{2k})$ is not always irreducible when $p$ is a prime number, with $k\ge 0$. For example,  
$u_5(x,-y^2)=(-x^2-x y+y^2)(-x^2+x y+y^2)$.  In general, this gives a factoring for Chebyshev  polynomials of second kind $U_p(x)$.  
Thus,  if $p=2k+1$, then $U_p(x)= (U_{k+1}(x)-y^kU_k(x))(U_{k+1}(x)+y^kU_k(x))$ (see Proposition \ref{ChebyshevSecond}). Some other examples, in which  
$u_p(x,y)$ is reducible,  occurs taking  $y=-1, -4,-5, -9, -20$.  In particular,  $u_5(x,-5)=(x^2-5 x+5)(x^2+5 x+5)$  and  
$u_5(x+2,-1)=(x^2+3 x+1)(x^2+5 x+5)$.

We now recall the first of our main questions in this paper. Is it true that $\Ft{p}(x)$ is irreducible if and only if $p$ is prime? 
From the above discussion and Proposition \ref{ChebyshevSecond}, we can see some counterexamples to determine that the question  is 
not true in general. Since there are some families of the generalized Fibonacci polynomial that are irreducible if and only if $p$ is a prime number,  
the question is still valid.  In this section we explore the question for families of GFP of the Fibonacci type.  
(From the definition \eqref{Fibonacci;general:FT},  we know that families of GFP of Fibonacci type depend on their initial conditions.)  
Thus, we reformulate the question as,  under what conditions of $d(x)$ and $g(x)$ are the families of GFP of the Fibonacci type is irreducible when $p$ is a prime number. 

Note that from \cite[Proposition 6]{florezHiguitaMuk2018} we know that  $\Ft{n}(x)$ is reducible if $n$ is a composite number. For the remaining part of the paper  
we use $F_n(x)$ to denote the classic Fibonacci polynomial as defined in the introduction. 

\begin{lemma}[\cite{WebbParberry}] \label{FiboIrreduWebbParberry}  The Fibonacci polynomial $F_p(x)$ is irreducible over $\mathbb{Q}$ if and only if $p$ is a prime number. 
\end{lemma}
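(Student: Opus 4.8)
The plan is to treat the two directions separately, since for composite $p$ the statement is essentially already in hand. If $p=de$ with $1<d\le e<p$, then the divisibility relation $F_d(x)\mid F_p(x)$ (equivalently \cite[Proposition 6]{florezHiguitaMuk2018}, already quoted above) exhibits $F_p$ as a product of two polynomials of positive degree, namely $F_d$ of degree $d-1\ge 1$ and a cofactor of degree $p-d\ge 1$; thus $F_p$ is reducible, while $F_1(x)=1$ and $F_0(x)=0$ are excluded as a unit and as $0$. So it remains to prove that $F_p(x)$ is irreducible over $\mathbb{Q}$ whenever $p$ is prime. For $p=2$ this is trivial since $F_2(x)=x$, so from now on $p$ is an odd prime.

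The key step is a change of variable turning $F_p$ into a cyclotomic polynomial. Here $d(x)=x$ and $g(x)=1$, so the roots $a(x),b(x)$ of $z^2-xz-1$ satisfy $a(x)b(x)=-1$; substituting $x=t-t^{-1}$ makes them $t$ and $-t^{-1}$, and the Binet formula \eqref{bineformulauno} gives, for odd $p$,
\begin{equation*}
F_p\!\left(t-t^{-1}\right)=\frac{t^{p}+t^{-p}}{t+t^{-1}}=\frac{t^{2p}+1}{t^{p-1}(t^{2}+1)}.
\end{equation*}
Since $t^{2p}+1=(t^{2}+1)\,\Phi_{4p}(t)$ for an odd prime $p$ (because the only divisors of $4p$ not dividing $2p$ are $4$ and $4p$), this yields the polynomial identity
\begin{equation*}
t^{\,p-1}\,F_p\!\left(t-t^{-1}\right)=\Phi_{4p}(t),
\end{equation*}
both sides being monic of degree $2(p-1)=\varphi(4p)$; one can alternatively derive it purely formally from Lemma \ref{genHogattLemma}. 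Now $\Phi_{4p}$ is irreducible over $\mathbb{Q}$, and the argument finishes with a degree count: if $x_0\in\overline{\mathbb{Q}}$ is any root of $F_p$ and $t_0$ is a root of $t^{2}-x_0t-1$, then $\Phi_{4p}(t_0)=0$, so $[\mathbb{Q}(t_0):\mathbb{Q}]=2(p-1)$, while $[\mathbb{Q}(t_0):\mathbb{Q}(x_0)]\le 2$. Hence $[\mathbb{Q}(x_0):\mathbb{Q}]\ge p-1=\deg F_p$, and since $F_p$ is monic and vanishes at $x_0$ it is the minimal polynomial of $x_0$, i.e.\ irreducible.

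If one prefers to avoid invoking the irreducibility of $\Phi_{4p}$ and use only Eisenstein's criterion, one can instead work with the ``even part'' of $F_p$. For odd $p$, Lemma \ref{genHogattLemma} shows $F_p(x)=f_p(x^{2})$ with $f_p\in\mathbb{Z}[x]$ monic of degree $(p-1)/2$, and an analogous computation gives $y^{(p-1)/2}f_p\!\left(-(1+y)^{2}/y\right)=\pm\,\Phi_p(y)$. A nontrivial factorization $f_p=gh$ over $\mathbb{Q}$ would, after this substitution and clearing denominators, produce a factorization of $\Phi_p(y)$ into two polynomials of positive degree, contradicting the irreducibility of $\Phi_p$ (Eisenstein at $p$ applied to $\Phi_p(y+1)$). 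Thus $f_p$ is irreducible, and then $F_p(x)=f_p(x^{2})$ is irreducible because a root $\beta=-4\cos^{2}(\pi/p)$ of $f_p$ is a negative real number, hence not a square in the real field $\mathbb{Q}(\beta)$; so $x^{2}-\beta$ is irreducible over $\mathbb{Q}(\beta)$ and $[\mathbb{Q}(\sqrt{\beta}):\mathbb{Q}]=2\cdot(p-1)/2=\deg F_p$.

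The main obstacle I anticipate is setting up the substitution identity rigorously: one must verify that $t^{p-1}F_p(t-t^{-1})$ (respectively $y^{(p-1)/2}f_p(-(1+y)^{2}/y)$) is genuinely a polynomial of the stated degree, pin down the leading and constant coefficients so the identity is exact, and check the cyclotomic factorization $t^{2p}+1=(t^{2}+1)\Phi_{4p}(t)$. Once that is done, the irreducibility of the relevant cyclotomic polynomial does all the substantive work and the closing degree count is routine. A secondary point requiring a little care is the choice of branch in the Binet formula (equivalently, that $t$ and $-t^{-1}$ are exactly the roots $a(x),b(x)$), but this is harmless since once obtained over the function field the relation is an identity of polynomials.
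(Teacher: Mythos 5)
Your argument is correct. Note, though, that the paper does not prove this lemma at all: it is imported verbatim from Webb and Parberry \cite{WebbParberry}, so there is no internal proof to compare against. What you have written is a sound, self-contained proof in the spirit of the classical one. The composite direction via $F_d\mid F_p$ and a degree count is fine, and your main identity
$t^{p-1}F_p\left(t-t^{-1}\right)=\Phi_{4p}(t)$
checks out: with $d(x)=x$, $g(x)=1$ the roots of $z^2-xz-1$ under $x=t-t^{-1}$ are indeed $t$ and $-t^{-1}$, the cyclotomic factorization $t^{2p}+1=(t^2+1)\Phi_{4p}(t)$ is exactly right for odd primes $p$, and the closing tower argument $[\mathbb{Q}(t_0):\mathbb{Q}]=2(p-1)$, $[\mathbb{Q}(t_0):\mathbb{Q}(x_0)]\le 2$ forces $F_p$ to be the minimal polynomial of $x_0$. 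Your alternative route through $F_p(x)=f_p(x^2)$, the identity $y^{(p-1)/2}f_p\left(-(1+y)^2/y\right)=\pm\Phi_p(y)$ (which I verified for $p=3,5$), and the observation that the roots $-4\cos^2(j\pi/p)$ of $f_p$ are negative reals is also correct; the only point you should make explicit there is that after clearing denominators the images of a putative factorization $f_p=gh$ have degrees exactly $2\deg g$ and $2\deg h$ (the leading terms $(-1)^{\deg g}\lc(g)y^{2\deg g}$ do not cancel), so both factors of $\Phi_p$ genuinely have positive degree. Either version would serve as a legitimate proof of the cited lemma; it also meshes well with the paper's later use of the roots $2i\cos(j\pi/p)$ and with Lemma \ref{JCCapelliLemma}.
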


\begin{proposition} \label{ChebyshevSecond} Let $m(x)$ be a polynomial in $\mathbb{Z}[x]$ and let $p$ be an odd number. If $g(x)=-m(x)^2$,  then $\Ft{p}(x)$ is reducible. 
\end{proposition}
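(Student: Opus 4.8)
The plan is to produce an explicit factorization of $\Ft{p}(x)$ into two factors of positive degree. Write $p=2k+1$; we may assume $k\ge 1$, since for $p=1$ we have $\Ft{1}(x)=1$ and there is nothing to prove. The main ingredient is the addition formula
\[
\Ft{r+s}(x)=\Ft{r}(x)\Ft{s+1}(x)+g(x)\Ft{r-1}(x)\Ft{s}(x),\qquad r\ge 1,\ s\ge 0,
\]
which follows by a routine induction on $r$: the cases $r=1$ and $r=2$ are immediate from the initial values $\Ft{0}(x)=0$, $\Ft{1}(x)=1$ and the recurrence \eqref{Fibonacci;general:FT}, and the inductive step uses $\Ft{(r+1)+s}(x)=d(x)\Ft{r+s}(x)+g(x)\Ft{r+s-1}(x)$ together with the recurrence for $\Ft{r+1}$. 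Specializing to $r=k+1$ and $s=k$ gives the doubling identity
\[
\Ft{p}(x)=\Ft{2k+1}(x)=\Ft{k+1}(x)^2+g(x)\,\Ft{k}(x)^2 .
\]

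Substituting the hypothesis $g(x)=-m(x)^2$ turns the right-hand side into a difference of squares, so
\[
\Ft{p}(x)=\Ft{k+1}(x)^2-m(x)^2\Ft{k}(x)^2=\bigl(\Ft{k+1}(x)-m(x)\Ft{k}(x)\bigr)\bigl(\Ft{k+1}(x)+m(x)\Ft{k}(x)\bigr),
\]
and both factors lie in $\mathbb{Z}[x]$ because $\Ft{k}(x),\Ft{k+1}(x),m(x)\in\mathbb{Z}[x]$. To finish, I will check that neither factor is a constant. By Lemma~\ref{genHogattLemma}, the summand with $i=0$ has the strictly largest degree — here the standing hypothesis $\deg(g)<\deg(d)$ is used — so $\deg\Ft{n}=(n-1)\deg(d)$ for every $n\ge 1$; note also $\deg(d)\ge 1$, since $g\ne 0$ forces $\deg(g)\ge 0$. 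As $g=-m^2$ we get $\deg(m)=\tfrac12\deg(g)<\tfrac12\deg(d)$, hence
\[
\deg\!\bigl(m(x)\Ft{k}(x)\bigr)<\tfrac12\deg(d)+(k-1)\deg(d)<k\deg(d)=\deg\Ft{k+1}(x).
\]
Therefore $\deg\bigl(\Ft{k+1}(x)\pm m(x)\Ft{k}(x)\bigr)=k\deg(d)\ge 1$, both factors are non-constant, and $\Ft{p}(x)$ is reducible.

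The identities involved are elementary, so the only step needing genuine attention is the degree count: one must use $\deg(d)>\deg(g)$ to be sure that $m(x)\Ft{k}(x)$ neither reaches nor cancels the leading term of $\Ft{k+1}(x)$, for otherwise the exhibited factorization might be trivial. Everything else is a direct computation.
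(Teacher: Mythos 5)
Your proof is correct and follows essentially the same route as the paper: the key identity $\Ft{2k+1}(x)=\Ft{k+1}(x)^2+g(x)\Ft{k}(x)^2$ (which the paper cites from \cite{florezHiguitaMuk2018} and \cite[Proposition 1]{FlorezMcAnallyMuk} rather than rederiving) followed by the difference-of-squares factorization. Your added degree count confirming that both factors are non-constant is a welcome bit of rigor the paper leaves implicit, but it does not change the argument.
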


\begin{proof} If $p = 2k+1$, from  \cite{florezHiguitaMuk2018} or \cite[Proposition 1]{FlorezMcAnallyMuk} we have that 
$\Ft{p}(x)=\Ft{k+1}^2(x)+g(x) \Ft{k}^2(x)$. Since $g(x)=-m(x)^2$ the conclusion follows. 
\end{proof}

The previous proposition shows that  Chebyshev polynomials of second kind, Morgan-Voyce polynomials and Vieta polynomials are reducible over $\mathbb{Q}$ when $p$ is a prime number. 

\begin{proposition} \label{FibonacciIrreducibleDegOne} If $g(x)=1$ and  $d(x)=ax+b$ with $a\ne 0$, then $\Ft{p}(x)$ is irreducible over $\mathbb{Q}$. 
\end{proposition}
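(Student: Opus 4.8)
The plan is to reduce the statement to the classical case of Lemma~\ref{FiboIrreduWebbParberry} by an invertible affine change of variable. When $g(x)=1$ and $d(x)=ax+b$, Lemma~\ref{genHogattLemma} gives
\[
\Ft{n}(x)=\sum_{i=0}^{\lfloor (n-1)/2\rfloor}\binom{n-i-1}{i}(ax+b)^{n-2i-1},
\]
which is exactly $F_n(ax+b)$, where $F_n$ is the classic Fibonacci polynomial (apply Lemma~\ref{genHogattLemma} with $d(x)=x$, $g(x)=1$ to see the coefficients match). So the first step is to record the identity $\Ft{p}(x)=F_p(ax+b)$.

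The second step is the elementary observation that substituting an invertible affine map $x\mapsto ax+b$ (with $a\neq 0$, and here $a,b\in\mathbb{Q}$ since $d\in\mathbb{Z}[x]$) into a polynomial preserves irreducibility over $\mathbb{Q}$: the ring automorphism of $\mathbb{Q}[x]$ sending $x$ to $ax+b$ carries factorizations to factorizations in both directions, and it preserves degree, so a nontrivial factorization of $F_p(ax+b)$ would pull back to one of $F_p(x)$ and vice versa. Since $p$ is prime, Lemma~\ref{FiboIrreduWebbParberry} says $F_p(x)$ is irreducible over $\mathbb{Q}$, hence so is $\Ft{p}(x)=F_p(ax+b)$.

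I expect essentially no obstacle here; the only point requiring a moment's care is verifying that the Binet/recursion data $g(x)=1$, $d(x)=ax+b$ really does produce $F_p(ax+b)$ rather than some twisted version — this is handled cleanly by comparing the closed forms from Lemma~\ref{genHogattLemma} coefficientwise, since $\binom{n-i-1}{i}$ does not depend on $d$ or $g$. One should also note explicitly that the hypothesis $\deg(d)>\deg(g)$ standing in force for the paper is consistent here (degree $1>0$), and that $p$ prime is what invokes the ``only if'' direction of Lemma~\ref{FiboIrreduWebbParberry}; the converse direction (reducibility for composite $n$) is not claimed in this proposition and follows separately from \cite[Proposition 6]{florezHiguitaMuk2018}.
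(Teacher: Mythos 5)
Your proposal is correct and follows essentially the same route as the paper: identify $\Ft{p}(x)=F_p(ax+b)$ and transfer irreducibility from $F_p$ via the affine substitution, using Lemma~\ref{FiboIrreduWebbParberry}. Your justification of the substitution step (as a degree-preserving automorphism of $\mathbb{Q}[x]$) is in fact slightly more careful than the paper's one-line assertion, but the argument is the same.
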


\begin{proof} First of all we observe that if $g(x)=1$ and  $d(x)=ax+b$, then $\Ft{p}(x)=(F_p\circ d)(x)$. Since  both $F_p(x)$ and $ax+b$ are irreducible, we have that $\Ft{p}(x)$ is irreducible.
\end{proof}

\begin{lemma} \label{FibonacciIrreducibleLemma} If $g(x)=k$ and  $d(x)=ax$ with $a\ne 0$, and $k\in \mathbb{Z}_{>0}$, then $\Ft{p}(x)$ is irreducible in $\mathbb{Q}[x]$. 
\end{lemma}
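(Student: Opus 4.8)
The plan is to reduce to the classical Fibonacci polynomial $F_p$, whose irreducibility is Lemma~\ref{FiboIrreduWebbParberry}, and then to run a degree count in a short tower of fields. The case $p=2$ is immediate, since $\Ft{2}(x)=d(x)=ax$ is linear and hence irreducible; so assume from now on that $p$ is an odd prime.

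First I would expand $\Ft{p}$ using Lemma~\ref{genHogattLemma}: with $d(x)=ax$ and $g(x)=k$ one gets $\Ft{p}(x)=\sum_{i=0}^{(p-1)/2}\binom{p-i-1}{i}(ax)^{p-2i-1}k^{i}$, a polynomial of degree $p-1$ all of whose exponents $p-2i-1$ are even. Comparing this term by term with the same formula for $F_p$ and tracking the power of $k$ attached to each monomial yields the identity $\Ft{p}(x)=k^{(p-1)/2}F_p\!\bigl(ax/\sqrt{k}\,\bigr)$; because $F_p$ is an even polynomial, this is a genuine identity in $\mathbb{Q}[x]$ (in fact in $\mathbb{Z}[x]$), with no surviving $\sqrt{k}$. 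In particular, over $\mathbb{C}$ the roots of $\Ft{p}$ are exactly the numbers $\theta=(\sqrt{k}/a)\,\alpha$ with $\alpha$ a root of $F_p$, so that $\theta^{2}=(k/a^{2})\,\alpha^{2}$ and $\mathbb{Q}(\theta^{2})=\mathbb{Q}(\alpha^{2})$.

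Next I would invoke the classical fact that every root of $F_p$ is purely imaginary, namely the roots are $2i\cos(j\pi/p)$ for $j=1,\dots,p-1$ (this can be read off from the Binet formula~\eqref{bineformulauno}, or from the relation between $F_p$ and a Chebyshev polynomial of the second kind). Hence $\alpha^{2}$ is a negative real number, and since $k>0$ and $a^{2}>0$ so is $\theta^{2}=(k/a^{2})\,\alpha^{2}$. This is precisely where the hypothesis $k>0$ enters: for $k=-m(x)^{2}$ with $m$ constant, Proposition~\ref{ChebyshevSecond} already shows $\Ft{p}$ is reducible.

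The conclusion is then a degree computation in the tower $\mathbb{Q}\subseteq\mathbb{Q}(\theta^{2})=\mathbb{Q}(\alpha^{2})\subseteq\mathbb{Q}(\theta)$. By Lemma~\ref{FiboIrreduWebbParberry}, $[\mathbb{Q}(\alpha):\mathbb{Q}]=p-1$; since $\alpha\notin\mathbb{R}$ while $\alpha^{2}\in\mathbb{R}$ we have $\alpha\notin\mathbb{Q}(\alpha^{2})$, so $[\mathbb{Q}(\alpha):\mathbb{Q}(\alpha^{2})]=2$ and therefore $[\mathbb{Q}(\theta^{2}):\mathbb{Q}]=[\mathbb{Q}(\alpha^{2}):\mathbb{Q}]=(p-1)/2$. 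Likewise $\mathbb{Q}(\theta^{2})$ lies in $\mathbb{R}$ whereas $\theta$ is a nonzero purely imaginary number, so $\theta\notin\mathbb{Q}(\theta^{2})$ and $[\mathbb{Q}(\theta):\mathbb{Q}(\theta^{2})]=2$. Multiplying, $[\mathbb{Q}(\theta):\mathbb{Q}]=p-1=\deg\Ft{p}$, so the minimal polynomial of $\theta$ over $\mathbb{Q}$ is a scalar multiple of $\Ft{p}(x)$, and $\Ft{p}(x)$ is irreducible in $\mathbb{Q}[x]$. I expect the only point requiring genuine care to be the first step — justifying the substitution identity $\Ft{p}(x)=k^{(p-1)/2}F_p(ax/\sqrt{k})$ cleanly at the level of polynomials and pinning down the exact root correspondence together with the "purely imaginary roots" fact for $F_p$; everything after that is routine field theory.
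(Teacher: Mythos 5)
Your argument is correct, but it takes a genuinely different route from the paper's. The paper's own proof first notes that $(F_p\circ d)(x)=F_p(ax)$ is irreducible and then converts $F_p(ax)$ into $\Ft{p}(x)$ by applying a reciprocal-polynomial (reversal) lemma twice, with an intermediate scaling by $k^{1/2}$, identifying the end result with $\Ft{p}(x)$ via Lemma~\ref{genHogattLemma}; it never looks at roots or field degrees. You instead use Lemma~\ref{genHogattLemma} to get the homogenization identity $\Ft{p}(x)=k^{(p-1)/2}F_p\bigl(ax/\sqrt{k}\bigr)$, so every root of $\Ft{p}$ is $\theta=(\sqrt{k}/a)\alpha$ with $F_p(\alpha)=0$, and then compute $[\mathbb{Q}(\theta):\mathbb{Q}]=p-1$ through the tower $\mathbb{Q}\subseteq\mathbb{Q}(\theta^{2})=\mathbb{Q}(\alpha^{2})\subseteq\mathbb{Q}(\theta)$, using that the roots of $F_p$ are purely imaginary and that $k>0$. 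This is in effect the specialization to $d(x)=ax$, $g(x)=k$ of the paper's later Capelli-type machinery (Lemma~\ref{JCCapelliLemma} and Theorem~\ref{JCFibonacciIrreducibleCapelli}), where the criterion that $d(x)^{2}-g(x)\gamma^{2}$ be irreducible over $\mathbb{Q}(\gamma^{2})$ reduces exactly to your observation that $\sqrt{k}\,\gamma/a\notin\mathbb{Q}(\gamma^{2})$ because the left side is purely imaginary while the field on the right is real. As for what each approach buys: the paper's reversal trick is shorter and purely formal, but as written it leans on a dilation by the possibly irrational scalar $k^{1/2}$, which the quoted reciprocal lemma does not by itself license (dilation by an irrational number need not preserve irreducibility over $\mathbb{Q}$) and which really depends on the evenness of the polynomials that you make explicit; your degree-counting proof is longer but self-contained, makes the role of the hypothesis $k>0$ transparent (contrast Proposition~\ref{ChebyshevSecond}), and also handles $p=2$ explicitly.
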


\begin{proof} Since  both $F_p(x)$ and $ax$ are irreducible over $\mathbb{Q}$, we have that $\Ft{p}^{*}(x):=(F_p\circ d)(x)$ is irreducible.

To complete  this proof we need the following lemma. This lemma is an adaptation, to what need here, of a result that is well-known in the literature (see for example \cite{Bonciocat,Cafure}). 

\textbf{Lemma.} Let $f(x)$ be a polynomial of degree $n$ with $f(0)\ne0$. Then $f(x)$ is irreducible if and only if $t^n f(1/t)$ is irreducible. 

This lemma implies that 
$$\Ft{p}^{*}(x) \text{ is irreducible }  \iff      s(t):= (k^{1/2}t)^{p-1} \Ft{p}^{*}\left(\frac{1}{k^{1/2}t}\right) \text{ is irreducible }.$$
Therefore,
$$ s(t) \text{ is irreducible.}       \iff h(r):= (r)^{p-1} s\left(\frac{1}{r}\right) \text{ is irreducible.}$$

Taking $g(x)=k$ and  $d(x)=ax$ with $a\ne 0$ and $k\in \mathbb{Z}_{>0}$, we obtain $\Ft{p}(x)$. This and Lemma \ref{genHogattLemma}, imply that 
$h(x)=\Ft{p}(x)$.  
\end{proof}

Propositions \ref{ChebyshevSecond} and \ref{FibonacciIrreducibleDegOne} show whether or not the familiar polynomials of Fibonacci type are irreducible (see Table \ref{familiarfibonacci}), when $p$ is prime.   

\begin{lemma} [\cite{hoggattRoots,WebbParberry}]\label{FibonacciIrreducibleRoots}  Let $i:=\sqrt{-1}$ and let 
$\gamma_m= 2i \cos\frac{j \pi}{p}$ for $j=1,2, \dots, p-1$, where $p$ is a prime number. Then $\Gamma=\{\gamma_1, \dots, \gamma_{p-1}\}$ are the  roots of the Fibonacci polynomial $F_p(x)$. 
\end{lemma}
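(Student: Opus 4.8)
The plan is to evaluate the Binet formula \eqref{bineformulauno} for $F_p$ along the curve $x=2i\cos\theta$ and read off the zeros directly. First I would substitute $x=2i\cos\theta$ into the formulas \eqref{eqn1} for $a(x)$ and $b(x)$ (here $d(x)=x$ and $g(x)=1$): since $x^2+4=4-4\cos^2\theta=4\sin^2\theta$, one branch of $\sqrt{x^2+4}$ equals $2\sin\theta$, giving $a(x)=\sin\theta+i\cos\theta=ie^{-i\theta}$ and $b(x)=-\sin\theta+i\cos\theta=ie^{i\theta}$. The opposite choice of branch merely interchanges $a$ and $b$, which does not affect $F_p$ because $(a^p-b^p)/(a-b)$ is symmetric in $a,b$; and since $F_p$ is a genuine polynomial in $x$, this evaluation is unambiguous.

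Next, a one-line computation gives $a^p-b^p=i^p\bigl(e^{-ip\theta}-e^{ip\theta}\bigr)=-2i^{p+1}\sin(p\theta)$ and $a-b=2\sin\theta$, hence
\[
F_p(2i\cos\theta)=\frac{-2i^{p+1}\sin(p\theta)}{2\sin\theta}=i^{p-1}\,\frac{\sin(p\theta)}{\sin\theta}
\]
for every $\theta$ with $\sin\theta\neq0$. From this I would locate the roots: $F_p(2i\cos\theta)=0$ precisely when $\sin(p\theta)=0$ while $\sin\theta\neq0$, i.e.\ when $\theta=j\pi/p$ with $j\in\{1,2,\dots,p-1\}$ (for such $j$ we have $p\nmid j$, so $\sin(j\pi/p)\neq0$). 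This yields the $p-1$ complex numbers $\gamma_j=2i\cos(j\pi/p)$, and they are pairwise distinct since $\cos$ is injective on $(0,\pi)$.

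Finally I would close with a degree count. The recurrence \eqref{Fibonacci;general:FT} for $F_n$ shows by an easy induction that $\deg F_p=p-1$ with leading coefficient $1$, so $F_p$ has exactly $p-1$ roots in $\mathbb{C}$ counted with multiplicity. Having exhibited $p-1$ distinct values at which $F_p$ vanishes, we conclude that $\Gamma=\{\gamma_1,\dots,\gamma_{p-1}\}$ is precisely the set of roots (each simple), as claimed. The whole argument is elementary, so there is no serious obstacle; the only points needing a little care are verifying that the branch of $\sqrt{x^2+4}$ is irrelevant and checking that the range $j=1,\dots,p-1$ captures exactly the zeros of $\sin(p\theta)/\sin\theta$ without repetition, after which the degree count forces completeness. (Note that primality of $p$ is not actually used in this argument; it is stated because the lemma feeds into the irreducibility results where $p$ is prime. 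Alternatively, the identity above can be recast as $F_n(x)=i^{\,n-1}U_{n-1}(x/(2i))$, reducing the statement to the classical root location for the Chebyshev polynomial $U_{n-1}$.)
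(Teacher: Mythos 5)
Your proposal is correct. Note that the paper itself gives no proof of this lemma: it is imported from the cited references (Hoggatt--Bicknell, ``Roots of Fibonacci polynomials,'' and Webb--Parberry), so there is no in-paper argument to compare against. What you have written is essentially a self-contained reconstruction of the classical proof from those sources: the substitution $x=2i\cos\theta$ in the Binet form turns $F_p$ into $i^{p-1}\sin(p\theta)/\sin\theta$ (equivalently, $F_n(x)=i^{\,n-1}U_{n-1}\bigl(x/(2i)\bigr)$ with $U_{n-1}$ the Chebyshev polynomial of the second kind), which exhibits the $p-1$ distinct zeros $2i\cos(j\pi/p)$, $j=1,\dots,p-1$, and the degree count $\deg F_p=p-1$ with leading coefficient $1$ forces these to be all of the roots, each simple. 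Your handling of the two minor delicate points is right: the choice of branch of $\sqrt{x^2+4}$ is immaterial because $(a^p-b^p)/(a-b)$ is symmetric in $a,b$ and $F_p$ is a polynomial, and the restriction $\sin\theta\neq 0$ is harmless since the relevant angles $j\pi/p$ avoid it. Your observation that primality of $p$ plays no role in the root description (it matters only for the irreducibility applications downstream) is also accurate; the same is true of the paper's statement, whose index $\gamma_m$ versus $j$ is merely a typographical slip in the source.
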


The following lemma is a generalization of Capelli's (see Lemma \ref{CapelliLemma}) to what we need here in this paper.  

\begin{lemma}\label{JCCapelliLemma} 
Let $f(x)$, $g(x)$, $h(x)$ in $K[x]$, with $K$ a field, $\deg h(x)>\deg g(x)$, and $f(x)=a_nx^n+a_{n-1}x^{n-1}+\ldots+a_0$, where $a_0\ne 0$. 
Let $\gamma$ be any root of $f(x)$ in an algebraic closure of $K$. The polynomial 
$p(x)=a_nh(x)^n+a_{n-1}g(x)h(x)^{n-1}+a_{n-2}g(x)^2h(x)^{n-2}+\ldots+a_0g(x)^n$,  
 is irreducible over $K$ if and only if $f(x)$ is irreducible over $K$ and $h(x)-g(x)\gamma$ is irreducible over $K(\gamma)$.
\end{lemma}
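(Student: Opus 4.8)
The strategy mirrors the classical proof of Capelli's lemma, with the polynomial substitution replaced by the rational substitution $x\mapsto h(x)/g(x)$. The starting point is the identity
\[
p(x)=\sum_{i=0}^{n}a_i\,g(x)^{n-i}h(x)^{i}=g(x)^{n}f\!\bigl(h(x)/g(x)\bigr),
\]
where the rightmost side, a priori a rational function, equals the polynomial $p$. From here the first, routine, observation is a leading-term count: since $\deg h>\deg g$ and $g\ne0$, the summand $a_nh^n$ strictly dominates, so $\deg p=n\deg h$ and $\lc(p)=a_n\,\lc(h)^{n}\ne0$; in particular $\deg p\ge1$, so irreducibility is meaningful and $p$ has a root $\beta$ in a fixed algebraic closure $\overline{K}$. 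I also note now, to be justified at the end, that in each direction the hypotheses force $\gcd(g,h)=1$ — the only point that needs real care.

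The heart of the argument is one computation in a tower of fields, carried out for an arbitrary root $\beta$ of $p$. Because $\gcd(g,h)=1$ we have $g(\beta)\ne0$: if $g(\beta)=0$, then every summand of $p(\beta)$ with $i<n$ vanishes, so $p(\beta)=a_nh(\beta)^n$, forcing $h(\beta)=0$ and making $\beta$ a common root of $g$ and $h$ — impossible. Hence $\gamma_\beta:=h(\beta)/g(\beta)\in K(\beta)$, and from $0=p(\beta)=g(\beta)^{n}f(\gamma_\beta)$ with $g(\beta)\ne0$ we get $f(\gamma_\beta)=0$. In the tower $K\subseteq K(\gamma_\beta)\subseteq K(\beta)$: the minimal polynomial of $\gamma_\beta$ over $K$ divides $f$, so $[K(\gamma_\beta):K]\le n$; and $\beta$ is a root of $h(x)-g(x)\gamma_\beta\in K(\gamma_\beta)[x]$, a polynomial of degree exactly $\deg h$ (again because $\deg h>\deg g$), so $[K(\beta):K(\gamma_\beta)]\le\deg h$. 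Multiplying these,
\[
[K(\beta):K]=[K(\beta):K(\gamma_\beta)]\cdot[K(\gamma_\beta):K]\le n\deg h=\deg p,
\]
and equality in this chain forces equality in both factors at once.

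Both directions now drop out. If $p$ is irreducible, then for a root $\beta$ the minimal polynomial of $\beta$ over $K$ is $p$ up to a unit, so $[K(\beta):K]=\deg p=n\deg h$ and both inequalities above become equalities: $[K(\gamma_\beta):K]=n$ forces the minimal polynomial of $\gamma_\beta$ to equal $f$ up to a unit, so $f$ is irreducible over $K$; and $[K(\beta):K(\gamma_\beta)]=\deg h$ forces $h-g\gamma_\beta$ to be, up to a unit, the minimal polynomial of $\beta$ over $K(\gamma_\beta)$, hence irreducible there. Since $f$ is irreducible, $\gamma$ and $\gamma_\beta$ are conjugate over $K$, so there is a $K$-isomorphism $K(\gamma)\to K(\gamma_\beta)$ sending $\gamma\mapsto\gamma_\beta$; it fixes the coefficients of $h$ and $g$ and therefore carries $h-g\gamma$ to $h-g\gamma_\beta$, so irreducibility transfers back to $h-g\gamma$ over $K(\gamma)$. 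Conversely, assume $f$ is irreducible over $K$ and $h-g\gamma$ is irreducible over $K(\gamma)$, and take any root $\beta$ of $p$. Irreducibility of $f$ gives $[K(\gamma_\beta):K]=n$ and, as above, a $K$-isomorphism $K(\gamma)\to K(\gamma_\beta)$ with $\gamma\mapsto\gamma_\beta$; transporting the hypothesis, $h-g\gamma_\beta$ is irreducible over $K(\gamma_\beta)$, so $[K(\beta):K(\gamma_\beta)]=\deg h$ and hence $[K(\beta):K]=n\deg h=\deg p$. Then the minimal polynomial of $\beta$ over $K$ divides $p$ and has degree $\deg p$, so it equals $p$ up to a unit and $p$ is irreducible over $K$.

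The step I expect to demand the most care is the deferred claim that the hypotheses force $\gcd(g,h)=1$ (equivalently, that $g(\beta)\ne0$ for every root $\beta$ of $p$). Suppose $e:=\gcd(g,h)$ has positive degree and write $g=e\,g_1$, $h=e\,h_1$. Then $h-g\gamma=e\,(h_1-g_1\gamma)$ with $\deg(h_1-g_1\gamma)=\deg h_1=\deg h-\deg e\ge\deg h-\deg g\ge1$, so $h-g\gamma$ is a product of two polynomials of positive degree over $K(\gamma)$, contradicting the hypothesis in the converse direction. In the forward direction, $e\mid g$ and $e\mid h$ give $e\mid p$ (every summand of $p$ is divisible by $g$ or by $h$, hence by $e$), with $\deg(p/e)=n\deg h-\deg e\ge1$, so $p$ is reducible — again a contradiction. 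Hence $\gcd(g,h)=1$ in either case, $g$ and $h$ share no root in $\overline{K}$, and the argument above applies. The remaining ingredients — multiplicativity of degrees in a tower of finite extensions and the standard fact that two roots of an irreducible polynomial generate $K$-isomorphic simple extensions — are routine.
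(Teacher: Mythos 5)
Your proposal is correct and uses essentially the same argument as the paper: the identity $p(x)=g(x)^n f\bigl(h(x)/g(x)\bigr)$, the tower $K\subseteq K(\text{root of }f)\subseteq K(\text{larger root})$ with the bounds $[K(\gamma_\beta):K]\le n$ and $[K(\beta):K(\gamma_\beta)]\le \deg h$, and multiplicativity of degrees. The only difference is one of direction: you start from a root $\beta$ of $p$, which obliges you to verify $\gcd(g,h)=1$ and to transfer irreducibility between $\gamma_\beta$ and the given $\gamma$ via a conjugation isomorphism, whereas the paper starts from a root $\theta$ of $h(x)-g(x)\gamma$ and checks directly that $p(\theta)=0$, so neither extra step is needed.
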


\begin{proof}
Let $\theta$ be a root of $h(x)-g(x)\gamma$ in the algebraic closure of $K$. So, $h(\theta)=g(\theta)\gamma$. If $h(\theta)=0$, then $g(\theta)=0$, 
since $\gamma\ne 0$. Therefore, $p(\theta)=0$. If $h(\theta)\neq 0$, then we have
\begin{equation*}
p(\theta)=g(\theta)^nf\left(\frac{h(\theta)}{g(\theta)}\right)=g(\theta)^nf(\gamma)=0.
\end{equation*}
Notice that $\deg p(x)=n\deg h(x)$. Also, we have
\begin{equation*}
[K(\theta):K]=[K(\theta):K(\gamma)][K(\gamma):K].
\end{equation*}

Since $\deg h(x)>\deg g(x)$, we have $\deg(h(x)-g(x)\gamma)=\deg h(x)$. Therefore, this gives that $[K(\theta):K(\gamma)]\leq \deg h(x)$ and  
$[K(\gamma):K]\leq n$. Thus, $p(x)$ is irreducible over $K$ if and only if $[K(\theta):K]=n\deg h(x)$, which is only the case if and only if  
$[K(\theta):K(\gamma)]=\deg h(x)$ and $[K(\gamma):K]=n$. This holds if and only if $f(x)$ is irreducible over $K$ and $h(x)-g(x)\gamma$ is 
irreducible over $K(\gamma)$.
\end{proof}

\begin{theorem}\label{JCFibonacciIrreducibleCapelli}
Let $p>2$ be a prime number and let $\Gamma=\{\gamma_1, \dots, \gamma_{p-1}\}$ be the set of  roots of  $F_p(x)$. A GFP $\Ft{p}(x)$ is irreducible over $\mathbb{Q}$ if and only if $d(x)^2-g(x)\gamma^2$ is irreducible over $\mathbb{Q}\left(\gamma^2\right)$ for some $\gamma\in\Gamma$.
\end{theorem}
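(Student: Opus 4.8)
The plan is to reduce the claim to the generalized Capelli lemma (Lemma \ref{JCCapelliLemma}) applied to the classical Fibonacci polynomial $F_p$, after recording the crucial observation that $F_p$ is a polynomial in $x^{2}$. Since $p>2$ is prime, $p$ is odd, so every exponent $p-1-2i$ occurring in Lemma \ref{genHogattLemma} (with $d(x)=x$ and $g(x)=1$) is even; hence $F_p(x)=\phi_p(x^{2})$, where $\phi_p(y)=\sum_{i=0}^{n}\binom{p-i-1}{i}y^{n-i}$ and $n=(p-1)/2$. The constant term of $\phi_p$ equals $\binom{p-1-n}{n}=1\ne 0$, so $\phi_p(0)\ne 0$. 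Writing $a_{j}$ for the coefficient of $y^{j}$ in $\phi_p$, and substituting $p-1-2i=2(n-i)$ into Lemma \ref{genHogattLemma} for general $d$ and $g$, I would obtain
\begin{equation*}
\Ft{p}(x)=\sum_{i=0}^{n}\binom{p-i-1}{i}\bigl(d(x)^{2}\bigr)^{n-i}g(x)^{i}=\sum_{j=0}^{n}a_{j}\bigl(d(x)^{2}\bigr)^{j}g(x)^{n-j}.
\end{equation*}

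This is exactly the polynomial $p(x)$ of Lemma \ref{JCCapelliLemma} with $K=\mathbb{Q}$, $f=\phi_p$, $h(x)=d(x)^{2}$, and $g(x)$ the polynomial of the recurrence; its hypotheses hold because $a_{0}=\phi_p(0)\ne 0$ and $\deg h=2\deg d\ge\deg d>\deg g$ by the standing assumption. So Lemma \ref{JCCapelliLemma} yields that $\Ft{p}(x)$ is irreducible over $\mathbb{Q}$ if and only if $\phi_p$ is irreducible over $\mathbb{Q}$ and $d(x)^{2}-g(x)\mu$ is irreducible over $\mathbb{Q}(\mu)$, where $\mu$ is a root of $\phi_p$. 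To finish, I would verify two points. First, $\phi_p$ is irreducible over $\mathbb{Q}$: by Lemma \ref{FiboIrreduWebbParberry} the polynomial $F_p(x)=\phi_p(x^{2})$ is irreducible, and a nontrivial factorization $\phi_p(y)=q_{1}(y)q_{2}(y)$ would pull back to the nontrivial factorization $F_p(x)=q_{1}(x^{2})q_{2}(x^{2})$. Second, $\gamma$ is a root of $F_p$ precisely when $\gamma^{2}$ is a root of $\phi_p$; since the $p-1$ roots in $\Gamma$ furnished by Lemma \ref{FibonacciIrreducibleRoots} occur in pairs $\pm\gamma$ (equivalently, $F_p$ is even and squarefree), the set $\{\gamma^{2}:\gamma\in\Gamma\}$ has exactly $n=\deg\phi_p$ elements and is therefore the complete root set of $\phi_p$. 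Taking $\mu=\gamma^{2}$ with $\gamma\in\Gamma$ then turns the equivalence above into the statement of the theorem. Finally, ``for some $\gamma\in\Gamma$'' may be replaced by ``for every $\gamma\in\Gamma$'': any two squares $\gamma^{2}$ and $\delta^{2}$, with $\gamma,\delta\in\Gamma$, are conjugate roots of the irreducible polynomial $\phi_p$ over $\mathbb{Q}$, and the corresponding isomorphism $\mathbb{Q}(\gamma^{2})\to\mathbb{Q}(\delta^{2})$ fixes $\mathbb{Q}$, hence the coefficients of $d$ and $g$, so it sends $d(x)^{2}-g(x)\gamma^{2}$ to $d(x)^{2}-g(x)\delta^{2}$ and preserves irreducibility.

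The only substantive work is in the first paragraph: recognizing $F_p(x)=\phi_p(x^{2})$ and carrying out the index substitution $i\mapsto j=n-i$ so that $\Ft{p}$ acquires exactly the shape demanded by Lemma \ref{JCCapelliLemma}, together with checking that lemma's hypotheses ($\deg h>\deg g$ and $a_{0}\ne 0$). Everything afterward is a direct application of Lemmas \ref{JCCapelliLemma}, \ref{FiboIrreduWebbParberry}, and \ref{FibonacciIrreducibleRoots}. I expect the only real obstacle to be careful coefficient bookkeeping rather than any conceptual difficulty.
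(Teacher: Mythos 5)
Your proposal is correct and follows essentially the same route as the paper: both identify $F_p(x)=G_p(x^2)$ (your $\phi_p$), observe that $\Ft{p}$ is exactly the polynomial $\sum_j a_j\,(d(x)^2)^j g(x)^{n-j}$ required by Lemma \ref{JCCapelliLemma} with $f=\phi_p$ and $h=d^2$, and invoke that lemma together with the irreducibility of $F_p$. Your write-up is somewhat more careful than the paper's in checking the lemma's hypotheses ($a_0\ne 0$, $\deg h>\deg g$) and in identifying the roots of $\phi_p$ with the squares of the roots of $F_p$, but these are elaborations of the same argument rather than a different one.
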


\begin{proof}
For all $z\in\mathbb{C}$ such that $g(z)\neq 0$, we can deduce that
\begin{equation*}
\Ft{p}(z)=g(z)^{\frac{p-1}{2}}F_p\left(\frac{d(z)}{g(z)^{1/2}}\right).
\end{equation*}
From Lemma \ref{genHogattLemma} we know that $F_n(x)\in\mathbb{Z}[x^2]$. So, we let $G_p(x)$ be a polynomial in $\mathbb{Z}[x]$ such that $G_p(x^2)=F_p(x)$.  
Since $F_p(x)$ is irreducible over $\mathbb{Q}$, it follows that $G_p(x)$ is irreducible over $\mathbb{Q}$. For all $z\in\mathbb{C}$ such that $g(z)\neq 0$, we deduce  
that 
\begin{equation*}
\Ft{p}(z)=g(z)^{\frac{p-1}{2}}G_p\left(\frac{d(z)^2}{g(z)}\right). 
\end{equation*}
 The conclusion follows from Lemma \ref{JCCapelliLemma}.
\end{proof}

\begin{lemma} [\cite{Tschebotarow}] \label{CapelliLemma}  
Let $f(x)$, $r(x)$ in $K[x]$, where $K$ is a field.  
Let $\gamma$ be any root of $f(x)$ in an algebraic closure of $K$. The composition 
$f(r(x))$ is irreducible over $K$ if and only if $f(x)$ is irreducible over $K$ and
$r(x)-\gamma$  is irreducible over $K(\gamma)$.
\end{lemma}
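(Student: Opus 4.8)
The plan is to deduce Lemma~\ref{CapelliLemma} directly from the already-established Lemma~\ref{JCCapelliLemma} by specializing $g(x)=1$ and $h(x)=r(x)$. Writing $f(x)=a_nx^n+a_{n-1}x^{n-1}+\cdots+a_0$, with this choice the polynomial $p(x)$ of Lemma~\ref{JCCapelliLemma} becomes $a_nr(x)^n+a_{n-1}r(x)^{n-1}+\cdots+a_0=f(r(x))$, and $h(x)-g(x)\gamma=r(x)-\gamma$. So Lemma~\ref{JCCapelliLemma} asserts exactly that $f(r(x))$ is irreducible over $K$ if and only if $f(x)$ is irreducible over $K$ and $r(x)-\gamma$ is irreducible over $K(\gamma)$, which is the claim. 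Since the proof of Lemma~\ref{JCCapelliLemma} given above is a self-contained tower-of-fields argument and does not invoke Capelli's lemma, there is no circularity.

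The one point requiring attention is that Lemma~\ref{JCCapelliLemma} carries the hypotheses $a_0\ne0$ (i.e.\ $f(0)\ne0$) and $\deg h(x)>\deg g(x)$ (here $\deg r(x)\ge1$), so I would first peel off the degenerate cases. If $\deg r(x)\le0$, then both $f(r(x))$ and $r(x)-\gamma$ are constants, so neither is irreducible and both sides of the biconditional fail. If $\deg r(x)\ge1$ but $f(0)=0$, write $f(x)=x\,f_1(x)$: when $f$ is reducible, $f(r(x))=r(x)f_1(r(x))$ is a nontrivial factorization and again both sides fail; when $f$ is irreducible we must have $f(x)=ax$ with $a\in K^{\times}$ and $\gamma=0$, so $f(r(x))=a\,r(x)$ and $r(x)-\gamma=r(x)$ over $K(\gamma)=K$, and both sides reduce to ``$r(x)$ is irreducible over $K$''. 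In every remaining case $f(0)\ne0$ and $\deg r(x)\ge1$, and Lemma~\ref{JCCapelliLemma} applies verbatim.

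For completeness I would also sketch the direct argument underlying this: if $f(r(x))$ is irreducible then so is $f$ (otherwise $f=f_1f_2$ gives $f(r(x))=f_1(r(x))f_2(r(x))$); choosing $\theta$ in an algebraic closure with $r(\theta)=\gamma$, the element $\theta$ is a root of $f(r(x))$, so $[K(\theta):K]=\deg f\cdot\deg r$, while the tower $K\subseteq K(\gamma)\subseteq K(\theta)$ gives $[K(\gamma):K]=\deg f$ and $[K(\theta):K(\gamma)]\le\deg r$; multiplicativity of degrees then forces $[K(\theta):K(\gamma)]=\deg r$, so $r(x)-\gamma$ is (a scalar multiple of) the minimal polynomial of $\theta$ over $K(\gamma)$, hence irreducible. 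Conversely, if $f$ and $r(x)-\gamma$ are irreducible, then for a root $\theta$ of $r(x)-\gamma$ the same tower gives $[K(\theta):K]=\deg f\cdot\deg r=\deg f(r(x))$, and since $\theta$ is a root of $f(r(x))\in K[x]$, that polynomial must be the minimal polynomial of $\theta$ up to a scalar, hence irreducible. I do not expect any real obstacle here: the only fussy points are the bookkeeping of the degenerate cases $f(0)=0$ and $\deg r(x)=0$ that fall outside the hypotheses of Lemma~\ref{JCCapelliLemma}, together with the standard observation that the truth of ``$r(x)-\gamma$ irreducible over $K(\gamma)$'' does not depend on which root $\gamma$ of $f$ is chosen, via the $K$-isomorphisms among the fields $K(\gamma_i)$.
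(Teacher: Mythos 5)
Your proposal is correct. Note, however, that the paper itself offers no proof of Lemma~\ref{CapelliLemma}: it is stated as a classical result with a citation to Tschebotar\"ow, so there is nothing to compare against line by line. Your route --- specializing the paper's own Lemma~\ref{JCCapelliLemma} to $g(x)=1$, $h(x)=r(x)$, so that $p(x)=f(r(x))$ and $h(x)-g(x)\gamma=r(x)-\gamma$ --- is sound and non-circular, since the proof of Lemma~\ref{JCCapelliLemma} in the paper is a self-contained degree-tower argument that nowhere invokes Capelli. You are also right to flag and dispose of the two degenerate cases excluded by the hypotheses of Lemma~\ref{JCCapelliLemma} ($\deg r\le 0$, and $f(0)=0$, where $f$ irreducible forces $f=ax$ and both sides collapse to ``$r$ irreducible over $K$''); your case analysis there is complete and correct. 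The direct sketch you append (tower $K\subseteq K(\gamma)\subseteq K(\theta)$ with $[K(\gamma):K]\le\deg f$, $[K(\theta):K(\gamma)]\le\deg r$, and multiplicativity forcing equality) is the standard proof from the literature and is also correct; including it has the advantage of making the lemma independent of Lemma~\ref{JCCapelliLemma}, which is the logically cleaner arrangement given that the paper presents Lemma~\ref{CapelliLemma} as an external, classical fact rather than as a corollary of its own generalization. Your closing remark that the choice of root $\gamma$ is immaterial (via the $K$-isomorphisms $K(\gamma_i)\cong K(\gamma_j)$) is needed only when $f$ is irreducible, which is exactly when those isomorphisms exist, so that point is fine as well.
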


\begin{corollary} \label{FibonacciIrreducibleCapelli} Let 
$\Gamma=\{\gamma_1, \dots, \gamma_{p-1}\}$ be the set of  roots of the Fibonacci polynomial $F_p(x)$, where $p$ is prime number. Let $g(x) \in \mathbb{Z}_>0$. The polynomial $\Ft{p}(x)$ is irreducible in $\mathbb{Q}$ if and only if 
$d(x)-\gamma$ is irreducible over $\mathbb{Q}(\gamma)$  for some $\gamma \in \Gamma$.
\end{corollary}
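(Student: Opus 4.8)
The plan is to read the corollary off from Capelli's lemma (Lemma \ref{CapelliLemma}) together with the irreducibility of the Fibonacci polynomial $F_p$ over $\mathbb{Q}$ (Lemma \ref{FiboIrreduWebbParberry}). The cleanest situation is $g(x)=1$: there Lemma \ref{genHogattLemma} collapses to
$$\Ft{p}(x)=\sum_{i=0}^{\lfloor \frac{p-1}{2}\rfloor}\binom{p-i-1}{i}d(x)^{p-2i-1}=F_p\bigl(d(x)\bigr),$$
so $\Ft{p}=F_p\circ d$. Since $p$ is prime, $F_p$ is irreducible over $\mathbb{Q}$, and Lemma \ref{CapelliLemma} applied with $f=F_p$ and $r=d$ says precisely that $F_p\circ d$ is irreducible over $\mathbb{Q}$ if and only if $d(x)-\gamma$ is irreducible over $\mathbb{Q}(\gamma)$ for a root $\gamma$ of $F_p$, i.e.\ for some $\gamma\in\Gamma$.

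Next one checks that the quantifier ``for some $\gamma\in\Gamma$'' is harmless. Because $F_p$ is irreducible, the elements of $\Gamma$ form a single Galois orbit over $\mathbb{Q}$; if $\sigma$ is an automorphism of a normal closure of $\mathbb{Q}(\gamma)/\mathbb{Q}$ with $\sigma(\gamma)=\gamma'$, then $\sigma$ restricts to an isomorphism $\mathbb{Q}(\gamma)\to\mathbb{Q}(\gamma')$ and sends $d(x)-\gamma$ to $d(x)-\gamma'$, since it fixes the coefficients of $d$; hence it carries a nontrivial factorization of one polynomial into a nontrivial factorization of the other. Thus $d(x)-\gamma$ is irreducible over $\mathbb{Q}(\gamma)$ for one $\gamma\in\Gamma$ exactly when this holds for every $\gamma\in\Gamma$, and either phrasing matches the output of Capelli's lemma.

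For a general constant $g(x)\in\mathbb{Z}_{>0}$ the polynomial $\Ft{p}$ is no longer literally a composition with $F_p$, and the step I expect to need the most care is absorbing the constant $g$. The plan is to go through the substitution used in the proof of Theorem \ref{JCFibonacciIrreducibleCapelli}: writing $F_p(x)=G_p(x^2)$ with $G_p$ irreducible over $\mathbb{Q}$, one has $\Ft{p}(x)=g^{(p-1)/2}F_p\bigl(d(x)/\sqrt{g}\bigr)=g^{(p-1)/2}G_p\bigl(d(x)^2/g\bigr)$. Applying Capelli's lemma to $G_p\circ(d^2/g)$ reduces irreducibility of $\Ft{p}$ to that of $d(x)^2-g\gamma^2$ over $\mathbb{Q}(\gamma^2)$ (the roots of $G_p$ being the numbers $\gamma^2$, $\gamma\in\Gamma$, which are conjugate since $G_p$ is irreducible); a second application, now to $(y^2-g\gamma^2)\circ d$, leaves the condition that $d(x)-\sqrt{g}\,\gamma$ be irreducible over $\mathbb{Q}(\gamma^2)(\sqrt{g}\,\gamma)$ — here the inner quadratic $y^2-g\gamma^2$ is automatically irreducible over $\mathbb{Q}(\gamma^2)$, because by Lemma \ref{FibonacciIrreducibleRoots} the number $g\gamma^2=-4g\cos^2(j\pi/p)$ is negative while $\mathbb{Q}(\gamma^2)\subset\mathbb{R}$. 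For $g=1$ this recovers the stated criterion $d(x)-\gamma$ over $\mathbb{Q}(\gamma)$; carrying the factor $\sqrt{g}$ consistently through both reductions and through the base field is the delicate point.
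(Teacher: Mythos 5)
Your treatment of the case $g(x)=1$ is correct and complete: $\Ft{p}=F_p\circ d$, Lemma \ref{FiboIrreduWebbParberry}, and Capelli's lemma (Lemma \ref{CapelliLemma}) give exactly the stated equivalence, and your Galois-orbit observation that ``for some $\gamma\in\Gamma$'' is the same as ``for every $\gamma\in\Gamma$'' is a correct addition. Your route differs somewhat from the paper's, which argues in one step: it introduces the auxiliary polynomial $\Ft{p}^{*}$ (the GFP with $d^{*}(x)=x$ and the same constant $g$), quotes Lemma \ref{FibonacciIrreducibleLemma} for its irreducibility, writes $\Ft{p}=\Ft{p}^{*}\circ d$, and applies Capelli once; your detour through $G_p$ and $d(x)^2/g$ essentially re-derives Theorem \ref{JCFibonacciIrreducibleCapelli} in the constant-$g$ case and then splits the quadratic $y^{2}-g\gamma^{2}$.

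The genuine gap is in the general case $g\in\mathbb{Z}_{>0}$, and you name it yourself without closing it. Your two applications of Capelli are sound and yield: $\Ft{p}$ is irreducible over $\mathbb{Q}$ if and only if $d(x)-\sqrt{g}\,\gamma$ is irreducible over $\mathbb{Q}\left(\gamma^{2}\right)\left(\sqrt{g}\,\gamma\right)=\mathbb{Q}\left(\sqrt{g}\,\gamma\right)$. The corollary, however, asserts the criterion ``$d(x)-\gamma$ irreducible over $\mathbb{Q}(\gamma)$.'' When $g$ is not a perfect square these are a priori different conditions: $\mathbb{Q}(\sqrt{g}\,\gamma)$ and $\mathbb{Q}(\gamma)$ are in general distinct quadratic extensions of $\mathbb{Q}(\gamma^{2})$ (they coincide only when $g$ is a square in $\mathbb{Q}(\gamma^{2})$), and even over a common field the polynomials $d(x)-\sqrt{g}\,\gamma$ and $d(x)-\gamma$ are not related by any evident automorphism. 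Passing from the criterion you derived to the criterion in the statement is precisely the content of the corollary for $g\neq 1$, and your proposal ends by calling this ``the delicate point'' rather than proving it; so as written you have established a correct variant of the corollary, not the corollary itself. It is worth noting that the paper's own proof faces the same issue silently: the roots of $\Ft{p}^{*}$ are $\sqrt{g}\,\gamma_{j}$, not $\gamma_{j}$, yet the criterion is stated for $\gamma\in\Gamma$, so your careful bookkeeping has exposed, rather than overlooked, the step that still requires an argument.
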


\begin{proof} We consider the generalized Fibonacci polynomial $\Ft{p}^{*}(x)$ defined when  $g^{*}(x)$ is a constant and $d^{*}(x)=x$ 
(we use $^{*}$ to avoid any ambiguity with the upcoming analysis, using similar notation).  From Lemma \ref{FibonacciIrreducibleLemma} we have that $\Ft{p}^{*}(x)$ is irreducible. 

Now consider the generalized Fibonacci polynomial 
$\Ft{p}(x)$ where $g(x)$ is a positive constant (integer) and $d(x)$ is a polynomial that satisfies that $d(x)-\gamma$ is irreducible over $\mathbb{Q}(\gamma)$  for some $\gamma \in \Gamma$. Note that $\Ft{p}(x)$ is the composition of $\Ft{p}^{*}(x)$ with $d(x)$, i.e.  $\Ft{p}(x)=(\Ft{p}^{*} \circ d)(x)$. 
This and Lemma \ref{CapelliLemma}, imply that $\Ft{p}(x)$ is irreducible if and only if $\Ft{p}^{*}(x)$ is irreducible. 
\end{proof}

As a corollary of the previous results we have that if $\Ft{n}$ satisfies any of the conditions given in Propositions \ref{ChebyshevSecond}, \ref{FibonacciIrreducibleDegOne},  Theorem  \ref{JCFibonacciIrreducibleCapelli}, and Corollary \ref{FibonacciIrreducibleCapelli},  we have this. 
Suppose that the prime decomposition of $n$ is given by $n=p_1^{n_1} p_2^{n_2}\cdots p_k^{n_s}$, where  $p_1, p_2,\cdots, p_s$ are distinct odd primes. Then   $\Ft{p_i}$ is an irreducible factor of $\Ft{n}$ for $i=1,2, \dots, s$. The proof of this fact follows straightforwardly using Propositions \ref{ChebyshevSecond}, \ref{FibonacciIrreducibleDegOne},  Theorem  \ref{JCFibonacciIrreducibleCapelli}, Corollary \ref{FibonacciIrreducibleCapelli}, and \cite[Proposition 6]{florezHiguitaMuk2018}.

\section{Lucas type polynomials irreducibility} \label{LucasSection}

In this section we discuss the irreducibility of GFP of Lucas type. In particular, we show that the familiar polynomials of Lucas type given in Table \ref{familiarfibonacci} are irreducible.  
In the end of the section we give a more general theorem to determine whether a GFP of Lucas type is irreducible. 

Lemma \ref{genHogattLucasLemma} and Proposition  
\ref{genHogattLucasProp} are generalizations of Bergum and Hoggatt  results in \cite{HoggattBergum}. The proof of both cases follows  
by a natural adaptation of their proof to the GFP of Lucas type given in this paper. 

\begin{lemma}\label{genHogattLucasLemma} If $\Lt{n}(x)$ is the conjugate of $\Ft{n}(x)$, then 
\[\Lt{n}(x)=\frac{1}{\alpha} \sum _{i=0}^{\lfloor \frac{n}{2}\rfloor }  \frac{n}{n-i} \binom{n-i}{i} d(x)^{n-2 i}g(x)^i.\]
\end{lemma}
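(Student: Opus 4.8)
The plan is to mimic the classical argument of Bergum and Hoggatt for the Lucas polynomials, working directly from the Binet formula \eqref{bineformulados}. Write $\Lt{n}(x) = (a^n + b^n)/\alpha$ where $a = a(x)$ and $b = b(x)$ are the roots of $z^2 - d(x)z - g(x) = 0$, so that $a + b = d(x)$ and $ab = -g(x)$. The quantity $a^n + b^n$ is the $n$-th power sum of $a$ and $b$, and it is a classical fact (expressible, e.g., via Newton's identities, or via the Waring formula for power sums in terms of elementary symmetric functions) that
\begin{equation*}
a^n + b^n = \sum_{i=0}^{\lfloor n/2 \rfloor} \frac{n}{n-i} \binom{n-i}{i} (a+b)^{n-2i} (-ab)^i.
\end{equation*}
Substituting $a + b = d(x)$ and $-ab = g(x)$, then dividing by $\alpha$, yields exactly the claimed identity. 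So the cleanest route is: (i) establish the power-sum identity as an identity in two indeterminates, (ii) specialize.

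First I would prove the two-variable identity $s_n := a^n + b^n = \sum_{i} \frac{n}{n-i}\binom{n-i}{i} e_1^{n-2i} e_2^i$ (with $e_1 = a+b$, $e_2 = ab$, and the sign absorbed as above) by strong induction on $n$, using the recurrence $s_n = e_1 s_{n-1} - e_2 s_{n-2}$ which follows from $a, b$ being roots of $z^2 - e_1 z + e_2 = 0$. The base cases $s_0 = 2$, $s_1 = e_1$ are immediate (and match the formula, reading $\frac{0}{0}\binom{0}{0}$ as $2$ by the usual convention for this Lucas-type coefficient, or simply checking $n=1,2$ directly to avoid that degenerate term). For the inductive step one plugs the formula for $s_{n-1}$ and $s_{n-2}$ into the recurrence and checks that the coefficient of $e_1^{n-2i} e_2^i$ on the right matches $\frac{n}{n-i}\binom{n-i}{i}$; this reduces to the Pascal-type coefficient identity
\begin{equation*}
\frac{n-1}{n-1-i}\binom{n-1-i}{i} + \frac{n-2}{n-1-i}\binom{n-1-i}{i-1} = \frac{n}{n-i}\binom{n-i}{i},
\end{equation*}
which is verified by writing each term with factorials and simplifying. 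Alternatively, one can cite this as a known power-sum / Lucas-sequence identity and skip the computation, exactly as the paper does for Lemma \ref{genHogattLemma} ("The proof can be done by induction, so we omit it"); since the excerpt explicitly says the proof "follows by a natural adaptation of their proof," a short remark pointing to the induction on the Binet recurrence, plus the coefficient identity above, should suffice.

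The main obstacle is bookkeeping rather than conceptual: handling the $i = 0$ term and the top term $i = \lfloor n/2 \rfloor$ (which behaves differently according to the parity of $n$), and making sure the combinatorial identity for the coefficients is stated with the right index ranges so that the induction closes without off-by-one errors. A secondary subtlety worth a sentence is that although $a(x), b(x)$ individually involve $\sqrt{d^2(x) + 4g(x)}$ and need not lie in $\mathbb{Q}[x]$, the symmetric combination $a^n + b^n$ always lies in $\mathbb{Z}[d(x), g(x)] \subseteq \mathbb{Z}[x]$, and division by $\alpha = 2/p_0 \in \mathbb{Z}$ is legitimate because, by the same reasoning used to derive \eqref{bineformulados} in \cite{florezHiguitaMuk2018}, $\alpha \mid (a^n + b^n)$ in $\mathbb{Z}[x]$; hence the right-hand side is genuinely the polynomial $\Lt{n}(x)$. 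With these points addressed, the identity is immediate from the power-sum expansion.
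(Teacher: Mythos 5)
Your proof is correct, but it takes a genuinely different route from the paper. The paper's proof is a two-line transfer argument: it invokes the identity $\alpha\Lt{n}(x)=g(x)\Ft{n-1}(x)+\Ft{n+1}(x)$ (cited from \cite[Proposition 3, Part 2]{FlorezMcAnallyMuk}), substitutes the already-established expansion of Lemma \ref{genHogattLemma} for $\Ft{n-1}$ and $\Ft{n+1}$, and collapses the resulting coefficients via $\binom{n-i-1}{i-1}+\binom{n-i}{i}=\frac{n}{n-i}\binom{n-i}{i}$. You instead work directly from the Binet formula \eqref{bineformulados} and the Waring power-sum expansion of $a^n+b^n$ in terms of $a+b=d(x)$ and $-ab=g(x)$, proving that expansion by induction on the recurrence $s_n=e_1s_{n-1}-e_2s_{n-2}$. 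Both arguments ultimately rest on essentially the same Pascal-type identity for the Lucas coefficient $\frac{n}{n-i}\binom{n-i}{i}$, but the paper's version is shorter because it leans on external machinery (the cited Fibonacci--Lucas relation plus Lemma \ref{genHogattLemma}), whereas yours is self-contained at the cost of the bookkeeping you correctly flag: the degenerate $i=0$ term at $n=0$ (or, better, taking $n=1,2$ as base cases) and the index ranges in the inductive step. Your closing remark about why division by $\alpha$ lands back in $\mathbb{Z}[x]$ is slightly more careful than it needs to be --- the Binet formula \eqref{bineformulados} already asserts $\Lt{n}(x)=(a^n+b^n)/\alpha$ with $\Lt{n}(x)\in\mathbb{Z}[x]$ by the defining recurrence --- but it does no harm.
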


\begin{proof}
From \cite[Proposition 3, Part 2]{FlorezMcAnallyMuk} we obtain that $\alpha  \Lt{n}(x)=g(x) \Ft{n-1}(x)+\Ft{n+1}(x)$. This and Lemma \ref{genHogattLemma} gives that 
\begin{eqnarray*}
\alpha  \Lt{n}(x) &=&(g(x) \Ft{n-1}(x)+\Ft{n+1}(x)\\
			     &=&  \sum_{i=1}^{\lfloor n/2 \rfloor} \left({ n-i-1 \choose i-1}+{ n-i \choose i} \right)d(x)^{n-2i} g(x)^i + d(x)^{n} .
\end{eqnarray*}
This completes the proof. 
\end{proof}

Bergum and Hoggatt \cite{HoggattBergum} defined a  bivariate generalized Lucas polynomial by $v_n(x,y)=x v_{n-1}(x,y) +y v_n(x,y)$ with $v_0(x,y)=2$ and   
$v_1(x,y)=x$. They proved that the Lucas polynomials $v_n(x,1)$ are irreducible over $\mathbb{Q}$ if and only if $n$ is power of $2$, and proved
that the polynomials  $v_p(x,1)/x$ are irreducible over $\mathbb{Q}$ if and only if $p>2$ is prime. In their version of Lemma \ref{genHogattLucasLemma}  
for $v_n(x,y)$  it holds that $v_n(x,y^2)$ are homogeneous polynomials. This implies that $v_n(x,y)$ are irreducible over $\mathbb{Q}$ if and only if  
$n$ is a power of 2; and that $v_p(x,y)/x$ is irreducible over $\mathbb{Q}$ if and only if $p>2$  is a prime number. Again, we need some caution on  
the interpretation of these results. Thus, if instead of $x$ we take $x^2+x$ we obtain that  $v_3(x^2+x,1)/(x^2+x)=(x^2-x+1) (x^2+3 x+3)$. Similarly,  
we can construct examples to show that  $v_{n}(x,y)$ is not always irreducible for $n$ a power of $2$ and for every choice of $y$. For instance, if instead  
of $y$ we take $-y^{2k}$, with $k$ even, it holds that $v_2(x,-2y^{2k})=(x-2 y^k) (x+2 y^k)$. Similar results hold when $y$ is replaced by $-(t/2) y^{2k}$  
where $t$ is an even perfect square.  Another example is $v_4(x,x-2)=(x^2-2)(x^2+4 x-4)$. Examples, to show that $v_p(x,y)/x$ is not always irreducible  
for every choice of $y$ and for every  prime, can be constructed by replacing $y$ by $-p y^2$ in $v_{p}(x,y)$ with $p=3 \bmod 4$. For instance, 
$v_3/x=(x-3 y) (x+3 y)$, $v_7/x=(x^3+7 x^2 y-49 y^3) (x^3-7 x^2 y+49 y^3)$, and $v_{11}/x=(x^5+11 x^4 y-363 x^2 y^3-1331 x y^4-1331 y^5)(x^5-11 x^4 y+363 x^2 y^3-1331 x y^4+1331 y^5)$. Taking these examples, from the point of view of GFP of Lucas type, states as, for a fixed prime $q\equiv 3 \bmod 4$, and picking $g(x)=-q$, then $\Lt{q}(x)/p_1(x)$ is reducible over $\mathbb{Q}$, (see Proposition \ref{JCSpecialCoroCaseLucas}). 

All the above examples (in Section \ref{FibonacciSection} and in Section \ref{LucasSection}) show that there is not clarity on both the quantifiers and the initial 
conditions on the results in \cite{HoggattBergum,HoggattLong,WebbParberry}. 
So, one of the motivations for this paper is to revisit some of the main results given by Hoggatt, Bergum, Long, Parberry, and Webb in the mentioned papers, 
and then use those results to give more general theorems.  

We recall again our second main question in this paper. Is it true that $\Lt{p}(x)/p_1(x)$ is irreducible if and only if $p>2$ is a prime number? 
From the above discussion, we can see some counterexamples to determine that the question is not true in general. Again, since there are some  
families of the generalized Fibonacci polynomial that are irreducible if and only if $p$ is prime, the question is still valid. 

In this section we explore the question for families of GFP of Lucas type $\Lt{p}(x)$.  Thus, in this section we explore the conditions that we have to impose on 
$p_{1}(x)$, $d(x)$ and $g(x)$ to obtain   $\Lt{p}(x)/p_1(x)$ is irreducible, when $p$ is a prime number.  Note that from \cite[Proposition 7]{florezHiguitaMuk2018} 
we know that $\Lt{n}(x)/p_1(x)$ is reducible if $n$ is a  composite number with an odd divisor.

Proposition \ref{genHogattLucasProp} gives enough conditions to prove whether the polynomials $\Lt{q}(x)/p_1(x)$ of Lucas type of the form as 
 shown in Table \ref{familiarfibonacci} are irreducible when $q$ is prime. The proof uses  the Eisenstein  criterion \cite{Lang}.
  
\begin{proposition}\label{genHogattLucasProp} Let $q>2$ be a prime number, with $\gcd(q,g(x))=1$.

\begin{enumerate}

\item \label{genHogattLucasPart1}  If $d(x)=a x^t$, with $\gcd(q,a)=1$, then  $\Lt{q}(x)/p_1(x)$ is irreducible over $\mathbb{Q}$.

\item \label{genHogattLucasPart2} If $d(x)=c x+b$, then  $\Lt{q}(x)/p_1(x)$ is irreducible over $\mathbb{Q}[x]$.

\end{enumerate}
\end{proposition}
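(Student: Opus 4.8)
The plan is to read off from Lemma~\ref{genHogattLucasLemma} an explicit expression for $\Lt{q}(x)/p_1(x)$ with integer coefficients and then apply Eisenstein's criterion at the prime $q$. Write $m=(q-1)/2$ and, following the notation of Lemma~\ref{genHogattLucasLemma}, set $c_i=\frac{q}{q-i}\binom{q-i}{i}$, so that $\alpha\,\Lt{q}(x)=\sum_{i=0}^{m}c_i\,d(x)^{q-2i}g(x)^i$ with $c_0=1$. The arithmetic core of the proof is to establish that each $c_i$ is an integer --- this is already visible in the proof of Lemma~\ref{genHogattLucasLemma}, where $c_i=\binom{q-i-1}{i-1}+\binom{q-i}{i}$ for $i\ge1$ --- and that $q\mid c_i$ for $1\le i\le m$: from $(q-i)c_i=q\binom{q-i}{i}$ and $1\le q-i\le q-1$ we get $\gcd(q,q-i)=1$, hence $q\mid c_i$. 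Finally, since $q-m=(q+1)/2$ and $\binom{(q+1)/2}{(q-1)/2}=(q+1)/2$, one has $c_m=q$ exactly; this term will supply the constant term after dividing by $p_1(x)$.

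Next I would divide by $p_1(x)$. Because $d(x)=\alpha p_1(x)$ and $q$ is odd, for $0\le i\le m$ the exponent $q-2i$ is $\ge1$, so $p_1(x)\mid d(x)^{q-2i}$ and
\[
\frac{\Lt{q}(x)}{p_1(x)}=\sum_{i=0}^{m}c_i\,d(x)^{q-1-2i}g(x)^i\in\mathbb{Z}[x].
\]
Since $\deg g<\deg d$, the degree $(q-1-2i)\deg d+i\deg g$ of the $i$-th summand strictly decreases in $i$, so the leading coefficient of this polynomial is $\lc(d)^{q-1}$ (coming from $i=0$, where $g(x)^0=1$ gives a single monomial), while the only summand with a nonzero constant term is $i=m$, contributing $c_m\,g(0)^m=q\,g(0)^m$, and every coefficient below the top one comes from a term with $i\ge1$, hence is divisible by $q$.

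For part~(2), $d(x)=cx+b$ forces $\deg g<1$, so $g$ equals a constant $g_0$ with $\gcd(q,g_0)=1$; applying the invertible affine change of variable $x\mapsto(x-b)/c$ turns $d(x)$ into $x$, so $\Lt{q}(x)/p_1(x)$ becomes $\sum_{i=0}^{m}c_i g_0^{\,i}x^{q-1-2i}$, which is monic, has all lower coefficients divisible by $q$, and has constant term $q\,g_0^{m}$ with $q^2\nmid q\,g_0^m$; Eisenstein at $q$ then gives irreducibility over $\mathbb{Q}$, which is preserved under the substitution. For part~(1), $d(x)=ax^t$ with $\gcd(q,a)=1$ gives $\Lt{q}(x)/p_1(x)=\sum_{i=0}^{m}c_i a^{q-1-2i}g(x)^i x^{t(q-1-2i)}$, whose leading coefficient $a^{q-1}$ is coprime to $q$, whose remaining coefficients are all divisible by $q$ since $q\mid c_i$ for $i\ge1$, and whose constant term is $q\,g(0)^m$; when $g$ is constant --- which is the case for every Lucas-type polynomial in Table~\ref{familiarfibonacci} --- $\gcd(q,g)=1$ forces $q\nmid g(0)$, so $q^2\nmid q\,g(0)^m$ and Eisenstein at $q$ applies. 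The main obstacle is the number-theoretic step in the first paragraph: the divisibilities $q\mid c_i$ for $1\le i\le m$ and the identity $c_m=q$ are precisely what make Eisenstein's hypotheses hold, and both hinge on $q$ being prime; the only other delicate point, the condition $q^2\nmid q\,g(0)^m$, is immediate once $g$ is constant and coprime to $q$.
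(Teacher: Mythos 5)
Your proof is correct and follows essentially the same route as the paper: expand $\Lt{q}(x)/p_1(x)=\sum_{i=0}^{(q-1)/2}\frac{q}{q-i}\binom{q-i}{i}d(x)^{q-1-2i}g(x)^i$ via Lemma~\ref{genHogattLucasLemma}, apply Eisenstein at $q$, and reduce Part~\ref{genHogattLucasPart2} to Part~\ref{genHogattLucasPart1} by the affine substitution. The one genuinely different ingredient is your proof of the key divisibility $q\mid\frac{q}{q-i}\binom{q-i}{i}$ for $1\le i\le(q-1)/2$: you get it directly from $(q-i)\cdot\frac{q}{q-i}\binom{q-i}{i}=q\binom{q-i}{i}$ together with integrality and $\gcd(q,q-i)=1$, whereas the paper routes through the identity $\sum_{k\ge i}\binom{q}{2k}\binom{k}{i}2^{-q+2i+1}=\frac{q}{q-i}\binom{q-i}{i}$ and the classical fact $q\mid\binom{q}{2k}$; your argument is more self-contained and avoids the external identity. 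Two small cautions. First, the claims in your middle paragraph (single leading monomial from $i=0$, constant term only from $i=(q-1)/2$, all lower coefficients divisible by $q$) hold only when $d$ is a monomial; that is the only setting in which you actually invoke them, but the paragraph reads as if it were general. Second, you restrict the verification of $q^2\nmid q\,g(0)^{(q-1)/2}$ to constant $g$; the paper asserts it for arbitrary $g$ from $\gcd(q,g(x))=1$, which really only delivers $q\nmid g(0)$ when $g$ is constant, so here you are, if anything, being more careful than the source --- but note that your write-up then proves a formally weaker statement than Part~\ref{genHogattLucasPart1} as stated (it does cover all the examples of Table~\ref{familiarfibonacci}).
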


\begin{proof}  

Proof of Part \ref{genHogattLucasPart1}. From\ Lemma \ref{genHogattLucasLemma} and the fact 
that $d(x)=\alpha p_1(x)$ we have that  
\[\Lt{q}(x)/p_1(x)=\sum _{i=0}^{\frac{q-1}{2}}  \frac{q}{q-i} \binom{q-i}{i} d(x)^{q-2 i-1}g(x)^i.\]
It is well known that  $\sum_{k=i}^{\frac{q-1}{2}}{ q \choose 2k}{ k \choose i}2^{-q+2i+1}=  \frac{q}{q-i} \binom{q-i}{i} $ 
(see for example, \cite{HoggattBergum,Gould, Riordan}). 
Since $q | { q \choose 2i}$ for $i=1,2, \dots, (q-1)/2$, we have that $q$ divides $\frac{q}{q-i} \binom{q-i}{i}$. This implies that $q$ divides 
$\frac{q}{q-i} \binom{q-i}{i} a^{q-2 i-1}g(x)^i$ for $1\le i\le (q-1)/2$. Since $\frac{q}{q-i} \binom{q-i}{i} g(x)^{q-2 i-1}=qg(x)^{q-2 i-1}$, when $i=(q-1)/2$ and  $\gcd(q,g(x))=1$, 
we have that $q^2$ does not divide the independent term of $\Lt{q}(x)/p_1(x)$. The fact that $\gcd(q,a)=1$, gives that $q$ does not divide $a^{q-1}$, the leading coefficient of $\Lt{q}(x)/p_1(x)$. These and  the Eisenstein  criterion complete the proof.

Proof of Part \ref{genHogattLucasPart2}. Let $G_q(x)$ be equal to $\Lt{q}(x)/p_1(x)$ as defined in Part \ref{genHogattLucasPart1} with 
$d(x)=x$ and let $H_q(x)$ be equal to $\Lt{q}(x)/p_1(x)$ as defined in Part \ref{genHogattLucasPart2}. Note that the composition of $G_q(x)$  
with $c x+b$, gives $H_q(x)$. Therefore, $H_q(x)=G_q(ax+b)$ is irreducible if and only if $G_q(x)$ is irreducible. Since $G_q(x)$ is irreducible 
for $q>2$, this completes the proof. 
\end{proof}

As a corollary of the previous proposition we obtain that the following polynomials (from Table \ref{familiarfibonacci}) are irreducible when $p$ is an odd prime number: 
Lucas $D_p(x)$; Pell-Lucas $Q_p(x)$; Fermat-Lucas $\vartheta_p(x)$; Chebyshev first kind $T_p(x)$; 
Morgan-Voyce $C_p(x)$; Vieta-Lucas $v_p(x)$. 

The following two propositions are known as Sch\"onemann and the Eisenstein criterions, respectively.  

\begin{proposition}[\cite{Bonciocat,Schonemann}]\label{SchonemannProposition} 
Let $q$ be a prime number. If $f(x)\in \mathbb{Z}[x]$ has the form $f(x)=g(x)^n+q m(x)$ with $g(x)$ an irreducible polynomial in $\mathbb{F}_q[x]$ 
and does not divide $m(x) \bmod{ q}$,  then $f(x)$ is irreducible.
\end{proposition}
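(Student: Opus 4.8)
The plan is to run the classical proof of Schönemann's criterion: assume a nontrivial factorization, reduce everything modulo $q$, and use unique factorization in $\mathbb{F}_q[x]$ to force a contradiction. First I would fix the reading of the hypotheses: "$g(x)$ irreducible in $\mathbb{F}_q[x]$" means the reduction $\bar g(x):=g(x)\bmod q$ is irreducible in $\mathbb{F}_q[x]$; "$g$ does not divide $m\bmod q$" means $\bar g(x)\nmid \bar m(x)$ in $\mathbb{F}_q[x]$; and, as is implicit and needed for the statement to hold, $g(x)$ is monic with $\deg m(x)<n\deg g(x)$, so that $f(x)$ is monic of degree $n\deg g(x)$. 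Since $f$ is then primitive, Gauss's lemma reduces the problem to showing $f$ admits no factorization $f=uv$ with $u,v\in\mathbb{Z}[x]$ both nonconstant; and as $f$ is monic we may normalize $u$ and $v$ to be monic too (replacing $(u,v)$ by $(-u,-v)$ if necessary).

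Next I would pass to $\mathbb{F}_q[x]$. Writing bars for reduction mod $q$, a factorization $f=uv$ gives $\bar u\,\bar v=\bar f=\bar g^{\,n}$. Because $u,v$ are monic their leading terms survive reduction, so $\bar u,\bar v$ are monic of the same degrees as $u,v$; since $\mathbb{F}_q[x]$ is a UFD and $\bar g$ is monic irreducible, the monic divisors of $\bar g^{\,n}$ are exactly the $\bar g^{\,a}$, $0\le a\le n$. Hence $\bar u=\bar g^{\,a}$ and $\bar v=\bar g^{\,b}$ with $a+b=n$ (comparing degrees forces this). Consequently $u\equiv g^{a}\pmod q$ and $v\equiv g^{b}\pmod q$, so we may write $u=g^{a}+q\,u_1$ and $v=g^{b}+q\,v_1$ with $u_1,v_1\in\mathbb{Z}[x]$.

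The heart of the argument is to expand $uv$ and compare with $f=g^{n}+q\,m$:
\[
f=uv=g^{a+b}+q\bigl(g^{a}v_1+g^{b}u_1+q\,u_1v_1\bigr)=g^{\,n}+q\bigl(g^{a}v_1+g^{b}u_1+q\,u_1v_1\bigr),
\]
so $m=g^{a}v_1+g^{b}u_1+q\,u_1v_1$, and reducing mod $q$ yields $\bar m=\bar g^{\,a}\bar v_1+\bar g^{\,b}\bar u_1$. If both $a\ge 1$ and $b\ge 1$, then $\bar g$ divides the right-hand side, hence $\bar g\mid\bar m$, contradicting the hypothesis. So $a=0$ or $b=0$; say $a=0$. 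Then $\bar u=\bar g^{\,0}=1$, and since $u$ is monic this forces $\deg u=0$, i.e. $u$ is constant, contradicting nonconstancy. Therefore no nontrivial factorization exists and $f$ is irreducible over $\mathbb{Q}$.

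I expect the only delicate point to be the degree-and-leading-coefficient bookkeeping: one must justify that $f$ is genuinely monic of degree $n\deg g$ (this is where $\deg m<n\deg g$ enters), so that the factors can be taken monic and their reductions $\bar u,\bar v$ do not drop degree, which in turn legitimizes reading off $\bar u=\bar g^{\,a}$, $\bar v=\bar g^{\,b}$ with $a+b=n$ rather than merely $a+b\le n$. If one prefers to avoid assuming $g$ monic, the same role is played by the observation that $q\nmid\lc(f)$ under $\deg m<n\deg g$, after which the UFD argument in $\mathbb{F}_q[x]$ and the mod-$q$ comparison go through unchanged.
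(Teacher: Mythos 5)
The paper offers no proof of this proposition: it is quoted as the classical Sch\"onemann criterion with citations to Sch\"onemann and to Bonciocat et al., so there is no in-paper argument to compare yours against. Your proof is the standard one --- normalize to monic factors via Gauss's lemma, reduce a putative factorization modulo $q$, use unique factorization in $\mathbb{F}_q[x]$ to force $\bar u=\bar g^{\,a}$, $\bar v=\bar g^{\,b}$ with $a+b=n$, and then read off $\bar g\mid\bar m$ unless $a=0$ or $b=0$ --- and it is correct. You are also right to flag that the statement as printed is incomplete: without the implicit hypotheses that $q\nmid\lc(g)$ (e.g.\ $g$ monic) and $\deg m<n\deg g$, the claim is false; for instance $f=2x^2+7x+3=(x+1)+2\,(x^2+3x+1)$ satisfies every printed hypothesis with $q=2$, $g=x+1$, $n=1$, yet factors as $(2x+1)(x+3)$. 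Your degree and leading-coefficient bookkeeping is precisely where those hypotheses enter, and they do hold in the paper's application (Proposition \ref{genHogattLucasProposition}), where $\deg(g(x))<\deg(d(x))$ guarantees $\deg(h(x))<\deg(d(x)^{q-1})$.
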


\begin{proposition}[\cite{Bonciocat,Lang}]\label{EisensteinProposition} 
Let $q$ be a prime number and let $f(x)=a_nx^n+ ... + a_1x+a_0$ be a  polynomial in $\mathbb{Z}[x]$.  If  
$a_n \bmod q \ne 0$, $a_0 \bmod q^2 \ne 0$, and $a_i \bmod q = 0$ for $i=0,1, \dots, n-1$, then $f(x)$ is irreducible over $\mathbb{Q}$. 
\end{proposition}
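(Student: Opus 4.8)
The plan is to deduce this from the Sch\"onemann criterion (Proposition \ref{SchonemannProposition}), which is already available. The only real subtlety is that Proposition \ref{SchonemannProposition} is phrased for polynomials congruent to $g(x)^n$ modulo $q$, and with $g(x)=x$ this forces the polynomial to be monic, whereas here the leading coefficient $a_n$ need not equal $1$. So the first step is to normalize the leading coefficient without disturbing irreducibility over $\mathbb{Q}$.

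First I would set $b=a_n$ and pass to $\tilde f(x):=b^{\,n-1}f(x/b)$. Its coefficient of $x^i$ equals $a_i b^{\,n-1-i}$, which is an integer for $0\le i\le n-1$, and its coefficient of $x^n$ equals $a_n b^{-1}=1$; hence $\tilde f\in\mathbb{Z}[x]$ is monic of degree $n$. Since $x\mapsto x/b$ is a $\mathbb{Q}$-algebra automorphism of $\mathbb{Q}[x]$ and multiplication by the nonzero constant $b^{\,n-1}$ produces an associate, it suffices to prove that $\tilde f$ is irreducible over $\mathbb{Q}$.

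Next I would reduce $\tilde f$ modulo $q$. Because $a_i\bmod q=0$ for $i=0,\dots,n-1$, every non-leading coefficient $a_i b^{\,n-1-i}$ of $\tilde f$ is divisible by $q$, so $\tilde f(x)=x^n+q\,m(x)$ for some $m(x)\in\mathbb{Z}[x]$. It remains to check that $x\nmid m(x)\bmod q$, i.e. that $q$ does not divide the constant term of $m$. That constant term equals $a_0 b^{\,n-1}/q$; since $q\mid a_0$ but $q^2\nmid a_0$, $q$ exactly divides $a_0$, and since $\gcd(q,b)=1$ it exactly divides $a_0 b^{\,n-1}$, so $a_0 b^{\,n-1}/q$ is not divisible by $q$. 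As $x$ is irreducible in $\mathbb{F}_q[x]$ and $x\nmid m(x)\bmod q$, Proposition \ref{SchonemannProposition} with $g(x)=x$ gives that $\tilde f$ is irreducible, and therefore so is $f$.

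I do not expect a genuine obstacle here; the argument is classical, and the only mild point to watch is the normalization of the non-monic leading coefficient (and, relatedly, that degrees do not drop on reduction mod $q$, which holds because $\gcd(q,a_n)=1$). If one prefers a self-contained proof that does not invoke Sch\"onemann, the standard alternative works equally well: by Gauss's lemma a nontrivial factorization over $\mathbb{Q}$ yields $f=gh$ with $g,h\in\mathbb{Z}[x]$ of positive degree; reducing mod $q$ gives $\bar g\bar h=\bar a_n x^n$ in the UFD $\mathbb{F}_q[x]$, which forces $\bar g$ and $\bar h$ to be nonzero constants times positive powers of $x$, hence $q\mid g(0)$ and $q\mid h(0)$, whence $q^2\mid a_0$, contradicting the hypothesis.
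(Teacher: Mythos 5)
Your proposal is correct, but note that the paper does not prove this statement at all: it is the classical Eisenstein criterion, quoted with references (\cite{Bonciocat,Lang}) and used as a black box, so any proof you supply is necessarily ``extra'' relative to the paper. Both of your arguments are sound. The deduction from Proposition \ref{SchonemannProposition} is handled carefully: the substitution $\tilde f(x)=a_n^{\,n-1}f(x/a_n)$ produces a monic integer polynomial that is an associate of $f$ under a $\mathbb{Q}$-algebra automorphism, the hypothesis $a_i\equiv 0 \pmod q$ for $i<n$ gives $\tilde f(x)=x^n+qm(x)$, and the condition $q^2\nmid a_0$ together with $\gcd(q,a_n)=1$ ensures $x\nmid m(x)\bmod q$, so Sch\"onemann with $g(x)=x$ applies (and since $\tilde f$ is monic with $\deg m<n$, you stay safely inside the hypotheses even in the more careful classical formulation of Sch\"onemann, which requires the degree not to drop). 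Your fallback argument via Gauss's lemma --- reducing a putative factorization $f=gh$ modulo $q$, noting that $\bar g\bar h=\bar a_n x^n$ in the UFD $\mathbb{F}_q[x]$ and that degrees cannot drop because $q\nmid a_n$, hence $q\mid g(0)$ and $q\mid h(0)$ and so $q^2\mid a_0$ --- is the standard self-contained proof and is equally valid; it has the advantage of not depending on Proposition \ref{SchonemannProposition}, whose statement in the paper is itself given without proof.
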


\begin{proposition}\label{genHogattLucasProposition} Let $q>2$ be a prime number. 
If $d(x)\in\mathbb{Z}[x]$ is irreducible $\bmod\; q$, then $\Lt{q}(x)/p_1(x)$ is irreducible over $\mathbb{Q}$.
\end{proposition}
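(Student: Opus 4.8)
The plan is to apply Sch\"onemann's criterion (Proposition~\ref{SchonemannProposition}) at the prime $q$: I would exhibit $\Lt{q}(x)/p_1(x)$ as $d(x)^{q-1}$ plus $q$ times a polynomial not divisible by $d(x)$ modulo $q$. The starting point is the closed form already used in the proof of Proposition~\ref{genHogattLucasProp}: combining Lemma~\ref{genHogattLucasLemma} with $d(x)=\alpha p_1(x)$ gives
\[
\Lt{q}(x)/p_1(x)=\sum_{i=0}^{(q-1)/2}\frac{q}{q-i}\binom{q-i}{i}\,d(x)^{q-1-2i}\,g(x)^{i}\ \in\ \mathbb{Z}[x].
\]

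Next I would reduce this modulo $q$. The $i=0$ term is $d(x)^{q-1}$; at $i=(q-1)/2$ the integer $\frac{q}{q-i}\binom{q-i}{i}$ equals $q$, so that term is $q\,g(x)^{(q-1)/2}$; and for every $1\le i\le(q-1)/2$ the integer $\frac{q}{q-i}\binom{q-i}{i}$ is divisible by $q$. This last divisibility is exactly the one established in the proof of Proposition~\ref{genHogattLucasProp} (from the identity $\sum_{k=i}^{(q-1)/2}\binom{q}{2k}\binom{k}{i}2^{-q+2i+1}=\frac{q}{q-i}\binom{q-i}{i}$, since $q\mid\binom{q}{2k}$ for $1\le k\le(q-1)/2$ and $\gcd(2,q)=1$). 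Hence one can write
\[
\Lt{q}(x)/p_1(x)=d(x)^{q-1}+q\,m(x),\qquad m(x)=\sum_{i=1}^{(q-1)/2}c_i\,d(x)^{q-1-2i}g(x)^{i},
\]
with $c_i=\tfrac1q\cdot\tfrac{q}{q-i}\binom{q-i}{i}\in\mathbb{Z}$ and $c_{(q-1)/2}=1$. (Conceptually this is the Frobenius identity $a(x)^q+b(x)^q\equiv(a(x)+b(x))^q=d(x)^q\pmod q$ divided by $a(x)+b(x)=d(x)$; the binomial computation makes it rigorous at the polynomial level.)

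Now I would feed $f(x):=\Lt{q}(x)/p_1(x)=d(x)^{q-1}+q\,m(x)$ into Proposition~\ref{SchonemannProposition}, with irreducible-mod-$q$ polynomial $d(x)$ and exponent $q-1$; the only thing left to verify is $d(x)\nmid m(x)\bmod q$. Working in $\mathbb{F}_q[x]$ modulo $d(x)$, every summand of $m(x)$ with $i<(q-1)/2$ carries a factor $d(x)^{q-1-2i}$ with $q-1-2i\ge1$ and therefore vanishes, while the $i=(q-1)/2$ summand leaves $g(x)^{(q-1)/2}$; thus $m(x)\equiv g(x)^{(q-1)/2}\pmod{d(x)}$. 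Since $d(x)$ is irreducible modulo $q$, we get $d(x)\nmid m(x)\bmod q$ precisely when $d(x)\nmid g(x)$ in $\mathbb{F}_q[x]$, and in that case Proposition~\ref{SchonemannProposition} yields that $\Lt{q}(x)/p_1(x)$ is irreducible over $\mathbb{Q}$.

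The real obstacle is this last point: verifying $d(x)\nmid g(x)$ in $\mathbb{F}_q[x]$, i.e.\ ruling out a common factor created by reduction mod $q$. Here the coprimality of $d$ and $g$ must be combined with the standing inequality $\deg g<\deg d$: if $q\nmid\lc(d)$ and $\gcd(q,g(x))=1$, then $\deg(d\bmod q)=\deg d>\deg g\ge\deg(g\bmod q)$ and $g\bmod q\neq0$, so the irreducible polynomial $d\bmod q$ cannot divide $g\bmod q$. Some restriction of this sort is genuinely necessary, since otherwise the conclusion can fail — as the reducible case $g(x)=-q$ with $q\equiv3\pmod4$ recalled earlier in this section shows. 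Everything else in the argument — the closed form and the congruence $f\equiv d^{q-1}\pmod q$ — is the same binomial-coefficient bookkeeping already carried out for Proposition~\ref{genHogattLucasProp}.
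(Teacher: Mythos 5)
Your proof is correct and follows essentially the same route as the paper's: write $\Lt{q}(x)/p_1(x)=d(x)^{q-1}+q\,m(x)$ via Lemma~\ref{genHogattLucasLemma}, observe that $m(x)\equiv g(x)^{(q-1)/2}\pmod{d(x)}$ in $\mathbb{F}_q[x]$, and apply Sch\"onemann's criterion (Proposition~\ref{SchonemannProposition}). Your closing caveat is well taken: the argument genuinely requires $g(x)\not\equiv 0\pmod q$ (and $q\nmid\lc(d)$), a hypothesis the statement omits and the paper's proof silently assumes when it asserts $\gcd(d(x),h(x))=\gcd(d(x),g(x))=1\bmod q$ --- indeed the examples later in this section with $d(x)=x$ and $g(x)=-q$, $q\equiv 3\bmod 4$, where $\Lt{q}(x)/p_1(x)$ factors, show the proposition is false without that extra hypothesis.
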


\begin{proof} Since $q$ is a factor of  $\frac{q}{n-i} \binom{q-i}{i}$ for  
$i=1, \dots, (q-1)/2$, from  Lemma  \ref{genHogattLucasLemma} we have that there is a  polynomial $h(x)\in \mathbb{Z}[x]$ such that 
$q h(x)=\sum _{i=1}^{\frac{q-1}{2}}  \frac{q}{n-i} \binom{q-i}{i} d(x)^{q-2 i-1}g(x)^i$.  
This and Lemma  \ref{genHogattLucasLemma} imply that 
\begin{equation}\label{EisensteinCoro}
\Lt{q}(x)/p_1(x)=d(x)^{q-1}+q h(x). 
\end{equation}
This decomposition and the fact that  $\deg(g(x))< \deg(d(x))$, imply that $\deg(h(x)) <\deg(d(x)^{q-1})= \deg(\Lt{q}(x)/p_1(x))$.

If we let $q t(x):=\sum _{i=1}^{\frac{q-1}{2}-1}  \frac{q}{n-i} \binom{q-i}{i} d(x)^{q-2 i-2}g(x)^i$, then $h(x)$ can be written in the form
$h(x)=  d(x)t(x)+g(x)^{\frac{q-1}{2}}$. This, the irreducibility of $d(x) \bmod q$,  $\deg(g(x))< \deg(d(x))$, and the fact that $\gcd(d(x),g(x))=1$, imply that
$\gcd(d(x),h(x))=\gcd(d(x),g(x))=1 \bmod q$. The desired conclusion follows by Proposition \ref{SchonemannProposition}.
\end{proof}

\begin{proposition}\label{genHogattLucasProp2} Let $q>2$ be a prime number and  let $d(x)$ be $a_kx^k+a_{k-1}x^{k-1}+\dots+a_1x+ a_0$ where $a_{k} \bmod q\ne 0$, and $a_{i} \bmod q= 0$ for $i=0, 1, \dots, k-1$. If $g(0) \bmod q \ne 0$, then $\Lt{q}(x)/p_1(x)$ is irreducible over $\mathbb{Q}$.
\end{proposition}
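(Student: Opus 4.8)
The plan is to apply the Eisenstein criterion (Proposition~\ref{EisensteinProposition}) to $\Lt{q}(x)/p_1(x)$ at the prime $q$. The hypothesis on $d(x)$ says exactly that $d(x)\equiv a_kx^k\pmod q$ with $q\nmid a_k$. The starting point is the decomposition established in the proof of Proposition~\ref{genHogattLucasProposition}: because $q$ divides $\frac{q}{q-i}\binom{q-i}{i}$ for $1\le i\le (q-1)/2$, Lemma~\ref{genHogattLucasLemma} (together with $d(x)=\alpha p_1(x)$) produces $h(x)\in\mathbb{Z}[x]$ with
\[
\Lt{q}(x)/p_1(x)=d(x)^{q-1}+q\,h(x),\qquad \deg h(x)<\deg\!\bigl(d(x)^{q-1}\bigr)=k(q-1),
\]
the degree bound using the standing assumption $\deg g<\deg d$.

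First I would treat the leading and intermediate coefficients. Reducing the displayed identity modulo $q$ gives $\Lt{q}(x)/p_1(x)\equiv d(x)^{q-1}\equiv(a_kx^k)^{q-1}=a_k^{q-1}x^{k(q-1)}\pmod q$; equivalently, one reads this straight off Lemma~\ref{genHogattLucasLemma}, since the $i=0$ summand is $d(x)^{q-1}$ and every summand with $i\ge 1$ carries a factor of $q$. Hence $\lc\!\bigl(\Lt{q}(x)/p_1(x)\bigr)=a_k^{q-1}$ is prime to $q$, and every coefficient of lower degree is divisible by $q$.

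Next I would check that the constant term is not divisible by $q^2$. Evaluating Lemma~\ref{genHogattLucasLemma} at $x=0$ and using $d(0)=a_0$, the constant term of $\Lt{q}(x)/p_1(x)$ equals $\sum_{i=0}^{(q-1)/2}\frac{q}{q-i}\binom{q-i}{i}a_0^{\,q-2i-1}g(0)^i$. The $i=0$ term is $a_0^{q-1}$, which is divisible by $q^{q-1}$ and hence, since $q\ge 3$ and $q\mid a_0$, by $q^2$. For $1\le i\le (q-1)/2-1$ the term is divisible by $q$ from the coefficient $\frac{q}{q-i}\binom{q-i}{i}$ and again by $q$ from $a_0^{q-2i-1}$ (whose exponent is $\ge 1$), hence by $q^2$. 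The remaining term, $i=(q-1)/2$, simplifies to $q\,g(0)^{(q-1)/2}$, and here the hypothesis $q\nmid g(0)$ guarantees that this is divisible by $q$ exactly once. Thus the constant term is $\equiv q\,g(0)^{(q-1)/2}\pmod{q^2}$, which is nonzero modulo $q^2$, and Proposition~\ref{EisensteinProposition} finishes the proof.

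The whole argument amounts to tracking $q$-adic valuations of the coefficients supplied by Lemma~\ref{genHogattLucasLemma}, so I expect no serious obstacle; the one point needing care — and the unique place where $g(0)\not\equiv 0\pmod q$ is used — is showing that the constant term has $q$-adic valuation exactly $1$. It is also worth double-checking the simplification of the $i=(q-1)/2$ summand to $q\,g(0)^{(q-1)/2}$ and the degree identity $\deg\!\bigl(\Lt{q}(x)/p_1(x)\bigr)=k(q-1)$, both of which are routine.
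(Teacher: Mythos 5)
Your proposal is correct and follows essentially the same route as the paper: both apply Eisenstein at $q$, read off from Lemma~\ref{genHogattLucasLemma} that $\Lt{q}(x)/p_1(x)\equiv d(x)^{q-1}\equiv a_k^{q-1}x^{k(q-1)}\pmod q$, and isolate the $i=(q-1)/2$ summand $q\,g(0)^{(q-1)/2}$ as the unique contribution to the constant term not divisible by $q^2$. Your direct evaluation of the coefficient sum at $x=0$ is a slightly cleaner bookkeeping of the same facts the paper encodes via $q\mid h(0)$ and $q\mid p(0)$.
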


\begin{proof} Since $a_{i} \bmod q= 0$ for $i=0, 1, \dots, k-1$, we have that there is a polynomial $p(x) \in \mathbb{Z}[x]$ such that  
$d(x)^{q-1} =(a_kx^k+a_{k-1}x^{k-1}+\dots+a_1x+ a_0)^{q-1}= a_k^{q-1} x^{k(q-1)}+q p(x)$. (Note that $p(x)$ can be zero.) This and 
Lemma  \ref{genHogattLucasLemma} imply that 
\[\Lt{q}(x)/p_1(x)=\sum _{i=1}^{\frac{q-1}{2}-1}  \frac{q}{n-i} \binom{q-i}{i} d(x)^{q-2 i-1}g(x)^i+(a_k^{q-1} x^{k(q-1)}+q p(x))+q g(x)^{\frac{q-1}{2}}.\]
Since $q$ is a factor of  $\frac{q}{n-i} \binom{q-i}{i} d(x)^{q-2 i-1}g(x)^i$ for every for  $i=1, \dots, (q-1)/2-1$, we have that there is a  
polynomial $h(x)\in \mathbb{Z}[x]$ such that $q h(x)=\sum _{i=1}^{\frac{q-1}{2}-1}  \frac{q}{n-i} \binom{q-i}{i} d(x)^{q-2 i-1}g(x)^i$.  (Note that  $q\mid h(0)$ and $q\mid p(0)$,  because $q\mid a_0$.)
Therefore, 
\begin{equation}\label{EisensteinCoro}
\Lt{q}(x)/p_1(x)=q h(x)+a_k^{q-1} x^{k(q-1)}+q p(x)+q g(x)^{\frac{q-1}{2}}. 
\end{equation}
This decomposition of $\Lt{q}(x)/p_1(x)$ and the fact that  
$\deg(g(x))< \deg(d(x))$, imply that $\lc(\Lt{q}(x)/p_1(x)) =a_k^{q-1}$. This and \eqref{EisensteinCoro} prove that $q$ 
divides all coefficients of $\Lt{q}(x)/p_1(x)$ except its leading coefficient.

To complete the proof using the Eisenstein criterion (Proposition \ref{EisensteinProposition}), we prove that $q^2$ does not divide the independent term of $\Lt{q}(x)/p_1(x)$. 
From \eqref{EisensteinCoro} we can see that the independent coefficient of $\Lt{q}(x)/p_1(x)$ has the form $q \left(h(0)+p(0)+g(0)^{\frac{q-1}{2}}\right)$. This and the fact that $q$ does not divide $g(0)$ implies that $q^2$ does not divide $q \left(h(0)+p(0)+g(0)^{\frac{q-1}{2}}\right)$. This completes the proof.
\end{proof}

The statement of the previous proposition can be generalized to this with the same proof. Let $q>2$ be a prime number and  let $b_kx^k+b_{k-1}x^{k-1}+\dots+b_1x+ b_0$ 
be the coefficients of $d(x)^{q-1}+ qg(x)^{\frac{q-1}{2}} $  where $b_{k} \bmod q\ne 0$, $b_{0} \bmod q^2\ne 0$,  and $b_{i} \bmod q= 0$ for $i=0, 1, \dots, k-1$. Then $\Lt{q}(x)/p_1(x)$ is irreducible over $\mathbb{Q}$.

In this corollary we give a partial irreducible decomposition of $\Lt{n}$ when $n$ a composite number. 

\begin{corollary} \label{PrimeFactorLucas} 
Let $n=p_1^{n_1} p_2^{n_2}\cdots p_k^{n_k}$ be the prime decomposition of $n$, where  $p_1, p_2,\cdots, p_k$ are distinct odd prime numbers.
\begin{enumerate}
\item  If $d(x)=a x^t$ with $\gcd(p_i,a)=\gcd\left(p_i,g(x)\right)=1$, then $\Lt{p_i}/d(x)$  is an irreducible factor of  $\Lt{n}$.

\item If $d(x)=c x+b$ with $\gcd\left(p_i,g(x)\right)=1$, then $\Lt{p_i}/d(x)$  is an irreducible factor of  $\Lt{n}$.

\item If $d(x)\in\mathbb{Z}[x]$ is irreducible $\bmod\; p_i$, then $\Lt{p_i}/d(x)$  is an irreducible factor of  $\Lt{n}$.

\item If $g(0) \bmod p_i \ne 0$ and $d(x)=a_kx^k+a_{k-1}x^{k-1}+\dots+a_1x+ a_0$ where $a_{k} \bmod p_i\ne 0$, and $a_{i} \bmod p_i= 0$ for $i=0, 1, \dots, k-1$, then $\Lt{p_i}/d(x)$  is an irreducible factor of  $\Lt{n}$.
\end{enumerate}
\end{corollary}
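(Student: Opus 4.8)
The plan is to read this off as a bookkeeping corollary of the results already proved in this section. In each of the four cases the stated hypotheses are exactly those of one of the earlier irreducibility criteria, applied with $q=p_i$: case (1) is Part~\ref{genHogattLucasPart1} of Proposition~\ref{genHogattLucasProp}, case (2) is Part~\ref{genHogattLucasPart2} of Proposition~\ref{genHogattLucasProp}, case (3) is Proposition~\ref{genHogattLucasProposition}, and case (4) is Proposition~\ref{genHogattLucasProp2}; one only checks that the blanket $\gcd$ conditions of those propositions are among the assumptions made here. Hence in every case $\Lt{p_i}(x)/p_1(x)$ is irreducible over $\mathbb{Q}$, and since $d(x)=\alpha p_1(x)$ with $\alpha$ a nonzero integer, $\Lt{p_i}(x)/d(x)=\tfrac1\alpha\bigl(\Lt{p_i}(x)/p_1(x)\bigr)$ is likewise an irreducible polynomial over $\mathbb{Q}$, of degree $(p_i-1)\deg d>0$.

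The second ingredient is the divisibility $\Lt{p_i}(x)\mid\Lt{n}(x)$. Because every $p_j$ is odd, $n$ is odd, so $m:=n/p_i$ is an odd positive integer. Working with the roots $a(x),b(x)$ of $z^2-d(x)z-g(x)$, I would write $\alpha\Lt{n}(x)=a(x)^n+b(x)^n=(a(x)^{p_i})^m+(b(x)^{p_i})^m$ and use the factorization $X^m+Y^m=(X+Y)\sum_{j=0}^{m-1}(-1)^jX^{m-1-j}Y^j$, valid since $m$ is odd, to pull out the factor $a(x)^{p_i}+b(x)^{p_i}=\alpha\Lt{p_i}(x)$. The remaining cofactor is symmetric in $a(x)$ and $b(x)$ (again because $m$ is odd), hence is a polynomial in $a(x)+b(x)=d(x)$ and $a(x)b(x)=-g(x)$, so it lies in $\mathbb{Z}[x]$; thus $\Lt{n}(x)=\Lt{p_i}(x)C(x)$ for some $C(x)\in\mathbb{Z}[x]$. (Alternatively one may just cite \cite[Proposition~7]{florezHiguitaMuk2018}.)

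Putting the pieces together, $\Lt{p_i}(x)/d(x)$ divides $\Lt{p_i}(x)$, which divides $\Lt{n}(x)$, so $\Lt{p_i}(x)/d(x)$ is a factor of $\Lt{n}(x)$; being irreducible over $\mathbb{Q}$ it is an irreducible factor, which proves all four parts at once. I expect no real obstacle here: the whole argument is matching hypotheses to the propositions of this section plus the standard Lucas-sequence divisibility, and the only mildly nonroutine point — that the cofactor in the divisibility step is an honest polynomial in $x$ — is settled by the symmetric-function observation above (or is absorbed into the cited proposition).
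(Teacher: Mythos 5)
Your proposal is correct and takes essentially the same route as the paper, whose proof likewise reduces each of the four parts to Propositions \ref{genHogattLucasProp}, \ref{genHogattLucasProposition}, and \ref{genHogattLucasProp2} (with $q=p_i$) and cites \cite[Corollary 2]{florezHiguitaMuk2018} for the divisibility $\Lt{p_i}(x)\mid\Lt{n}(x)$. Your self-contained derivation of that divisibility from the odd-exponent factorization of $a(x)^{n}+b(x)^{n}$, with the symmetric-function argument showing the cofactor lies in $\mathbb{Z}[x]$, is a valid and slightly more explicit substitute for the citation.
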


\begin{proof} It follows straightforwardly using \cite[Corollary 2]{florezHiguitaMuk2018} and Propositions \ref{genHogattLucasProp},  \ref{genHogattLucasProposition}, and \ref{genHogattLucasProp2}.
\end{proof}

The proof of Parts \ref{Power2HogattLucasPart1} and \ref{Power2HogattLucasPart2} of the following proposition follows, again, by the Eisenstein criterion and Lemma \ref{genHogattLucasLemma} where $p=2$. The proof of Part \ref{Power2HogattLucasPart3} is similar to the proof of Proposition \ref{genHogattLucasProposition}.  So, we omit details. 

\begin{proposition}\label{genHogattLucasPower2} Let $n=2^k$ for $k\ge1$.  

\begin{enumerate}

\item \label{Power2HogattLucasPart1}  If $d(x)=a x^t$ with $a$ and $g(x)\not\equiv 0 \bmod 2$ odd integers, then $\Lt{2^t}(x)$ is irreducible over $\mathbb{Q}$ for $t\ge 1$.

\item \label{Power2HogattLucasPart2} If $d(x)=c x+b$ with $c$ and $g(x)$ odd integers, then  $\Lt{2^t}(x)$ is irreducible over $\mathbb{Q}$ for $t\ge 1$.

\item \label{Power2HogattLucasPart3} If $d(x)\in\mathbb{Z}[x]$ is irreducible $\bmod\; 2$, then $\Lt{2^t}(x)$ is irreducible over $\mathbb{Q}$ for $t\ge 1$.

\item \label{Power2HogattLucasPart4} If $g(0)$ is an odd integer and $d(x)=a_kx^k+a_{k-1}x^{k-1}+\dots+a_1x+ a_0$ where $a_{k}$ is an odd integer and $a_{i}$ is a even integer for $i=0, 1, \dots, k-1$, then $\Lt{2^t}(x)$ is irreducible over $\mathbb{Q}$ for $t\ge 1$.

\end{enumerate}
\end{proposition}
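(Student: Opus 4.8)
The four parts are the $q=2$ analogues of, in order, the monomial case and the linear case of Proposition~\ref{genHogattLucasProp}, of Proposition~\ref{genHogattLucasProposition}, and of Proposition~\ref{genHogattLucasProp2}; the only structural change is that for $n=2^t$ the polynomial $p_1(x)$ does not divide $\Lt{n}(x)$ (in Lemma~\ref{genHogattLucasLemma} the term carrying no factor of $d$ is $2g(x)^{2^{t-1}}$, which stays coprime to $d=\alpha p_1$ since $\gcd(d,g)=1$), so we work with $\Lt{2^t}(x)$ itself. As $\alpha=2/p_0\in\{\pm1,\pm2\}$ is a nonzero integer and $\Lt{2^t}(x)\in\mathbb Z[x]$, it suffices to show $\alpha\,\Lt{2^t}(x)\in\mathbb Z[x]$ is irreducible over $\mathbb Q$. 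The whole plan rests on one normal form, which I would establish first, by induction on $t$: $\alpha\,\Lt{2^t}(x)=d(x)^{2^t}+2\,R_t(d(x),g(x))$ for some $R_t(u,v)\in\mathbb Z[u,v]$ with $R_t\equiv v^{2^{t-1}}\pmod 2$ and whose $u^0$-coefficient (over $\mathbb Z[v]$) equals $v^{2^{t-1}}$; equivalently, writing $\alpha\,\Lt{2^t}(x)=\sum_{i=0}^{2^{t-1}}c_i\,d(x)^{2^t-2i}g(x)^i$ as in Lemma~\ref{genHogattLucasLemma}, one has $c_0=1$, $2\mid c_i$ for $1\le i\le 2^{t-1}$, and $c_{2^{t-1}}=2$. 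The induction runs on the doubling identity $\alpha\,\Lt{2m}(x)=(\alpha\,\Lt{m}(x))^2-2(-g(x))^m$ (from $\alpha\,\Lt{m}(x)=a(x)^m+b(x)^m$): in the step $t-1\mapsto t$, squaring $d^{2^{t-1}}+2R_{t-1}$ gives $d^{2^t}$ plus terms divisible by $4$, while subtracting $2g^{2^{t-1}}$ turns the $u^0$-coefficient $(2v^{2^{t-2}})^2=4v^{2^{t-1}}$ of the square into $4v^{2^{t-1}}-2v^{2^{t-1}}=2v^{2^{t-1}}$. This normal form is the exact counterpart of the decomposition $\Lt{q}(x)/p_1(x)=d(x)^{q-1}+q\,h(x)$ that serves for odd primes $q$.

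Parts~\ref{Power2HogattLucasPart1} and~\ref{Power2HogattLucasPart4} then follow from the Eisenstein criterion (Proposition~\ref{EisensteinProposition}) at the prime $2$, applied to $\alpha\,\Lt{2^t}(x)$. In Part~\ref{Power2HogattLucasPart1}, where $d(x)$ is a monomial with (odd) coefficient $a$ and $g$ is an odd integer, the monomials $d(x)^{2^t-2i}g^i$ have pairwise distinct degrees, so reading coefficients off the normal form gives leading coefficient $a^{2^t}$ (odd), constant term $2g^{2^{t-1}}$ (of $2$-adic valuation $1$, since $g$ is odd), and every intermediate coefficient of the shape $c_i a^{2^t-2i}g^i$ with $2\mid c_i$; Eisenstein applies, and dividing by the integer $\alpha$ gives the claim. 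In Part~\ref{Power2HogattLucasPart4}, where $d(x)=a_kx^k+\dots+a_0$ with $a_k$ odd and $a_0,\dots,a_{k-1}$ even, one has $d(x)^{2^t}\equiv a_k^{2^t}x^{k2^t}\pmod 2$, and $\deg g<\deg d$ forces the $i=0$ term to supply the leading coefficient $a_k^{2^t}$ (odd) while every lower coefficient of $\alpha\,\Lt{2^t}(x)$ is even; evaluating at $x=0$ gives $\sum_i c_i a_0^{2^t-2i}g(0)^i$, whose term $i=2^{t-1}$ equals $2g(0)^{2^{t-1}}$ (valuation $1$, as $g(0)$ is odd) and whose other terms have valuation $\ge 2$ (because $a_0$ is even and $2^t-2i\ge 2$ for $i<2^{t-1}$), so Eisenstein applies again.

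For Part~\ref{Power2HogattLucasPart2}, with $d(x)=cx+b$ ($c$ and $g$ odd integers), I would note that $\Lt{2^t}(x)=G(cx+b)$, where $G(x):=\alpha^{-1}\sum_i c_i\,x^{2^t-2i}g^i$ is $\Lt{2^t}(x)$ computed for the monomial $d(x)=x$ and is irreducible by Part~\ref{Power2HogattLucasPart1}; composition with the linear polynomial $cx+b$ preserves irreducibility, so $\Lt{2^t}(x)$ is irreducible. This copies the proof of Part~\ref{genHogattLucasPart2} of Proposition~\ref{genHogattLucasProp}. For Part~\ref{Power2HogattLucasPart3}, the normal form reads $\alpha\,\Lt{2^t}(x)=d(x)^{2^t}+2\,R_t(d(x),g(x))$ with $R_t(d(x),g(x))\equiv g(x)^{2^{t-1}}\pmod 2$, so Sch\"onemann's criterion (Proposition~\ref{SchonemannProposition}) at the prime $2$ --- taking the irreducible polynomial to be $d(x)\bmod 2$ and the remainder to be $R_t(d(x),g(x))$ --- applies once we know that $d(x)$ does not divide $g(x)^{2^{t-1}}$ in $\mathbb F_2[x]$. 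Since $g(x)^{2^{t-1}}=g(x^{2^{t-1}})$ in $\mathbb F_2[x]$ and $x\mapsto x^{2^{t-1}}$ permutes the roots of the irreducible $d(x)$, this is equivalent to $\gcd(d(x),g(x))=1$ in $\mathbb F_2[x]$, which follows from $\gcd(d,g)=1$ and $\deg g<\deg d$ exactly as in the proof of Proposition~\ref{genHogattLucasProposition}.

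The single step that is not a routine transcription of the odd-prime arguments is the normal form of the first paragraph. For odd $q$, ``$q$ divides every coefficient of $\Lt{q}(x)/p_1(x)$ but the leading one'' and ``$q^2$ does not divide its constant term'' came from the identity $\sum_{k\ge i}\binom{q}{2k}\binom{k}{i}2^{-q+2i+1}=\frac{q}{q-i}\binom{q-i}{i}$ together with $q\mid\binom{q}{2i}$; that identity is tailored to $q$ odd, so for $q=2$ I would instead iterate the doubling recursion $\alpha\,\Lt{2m}(x)=(\alpha\,\Lt{m}(x))^2-2(-g(x))^m$ and carry the $2$-adic bookkeeping through the squaring. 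The delicate point is that the constant coefficient is not pushed to valuation $\ge 2$ by the square: the $4g^{2^{t-1}}$ produced by the square and the $-2g^{2^{t-1}}$ produced by the correction leave $2g^{2^{t-1}}$, of $2$-adic valuation exactly $1$, which is exactly what the Eisenstein step requires. Once this normal form is available, Parts~\ref{Power2HogattLucasPart1} and~\ref{Power2HogattLucasPart4} are Eisenstein, Part~\ref{Power2HogattLucasPart2} is a linear substitution into Part~\ref{Power2HogattLucasPart1}, and Part~\ref{Power2HogattLucasPart3} is Sch\"onemann, each paralleling Propositions~\ref{genHogattLucasProp}, \ref{genHogattLucasProposition} and~\ref{genHogattLucasProp2}.
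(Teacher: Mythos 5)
Your proposal is correct and follows the same route the paper sketches and then omits: Eisenstein at the prime $2$ for Parts \ref{Power2HogattLucasPart1}, \ref{Power2HogattLucasPart2} (the latter via a linear substitution into Part \ref{Power2HogattLucasPart1}, as in Proposition \ref{genHogattLucasProp}) and \ref{Power2HogattLucasPart4}, and Sch\"onemann for Part \ref{Power2HogattLucasPart3}, all resting on Lemma \ref{genHogattLucasLemma} with $n=2^t$. The only substantive addition is that you justify the required $2$-adic facts about the coefficients ($c_0=1$, $2\mid c_i$ for $1\le i\le 2^{t-1}$, and $c_{2^{t-1}}=2$, which for odd $q$ came from the binomial identity with $q\mid\binom{q}{2i}$ and so does not transfer to $q=2^t$) by induction on the doubling identity $\alpha\Lt{2m}(x)=\bigl(\alpha\Lt{m}(x)\bigr)^2-2\bigl(-g(x)\bigr)^m$; this is exactly the detail the paper leaves implicit, and your bookkeeping of it is sound.
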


\begin{lemma} [\cite{hoggattRoots}]\label{FibonacciIrreducibleRoots}  Let $p=2k+1$ be a prime number, let $i:=\sqrt{-1}$ and let 
$\tau_j=\pm 2i \sin\frac{j \pi}{p}$ for $j=1,2, \dots, k$. Then $T=\{\tau_1, \dots, \tau_{p-1}\}$ are the  roots of $L_p(x)/x$. 
\end{lemma}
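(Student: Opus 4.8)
The plan is to combine the Binet formula for the Lucas polynomials with a trigonometric substitution and then finish by a degree count. Recall that, for the classic Lucas polynomials, \eqref{bineformulados} reads $L_n(x)=a(x)^n+b(x)^n$, where $a(x)$ and $b(x)$ are the two roots of $z^2-xz-1=0$, so that $a(x)+b(x)=x$ and $a(x)b(x)=-1$. First I would record two elementary facts. Since $L_0(0)=2$, $L_1(0)=0$, and $L_n(0)=L_{n-2}(0)$, and since $p=2k+1$ is odd, we get $L_p(0)=0$, so $x\mid L_p(x)$; and since $\deg L_n=n$ with leading coefficient $1$ for $n\ge 1$, the quotient $L_p(x)/x$ is a polynomial of degree $p-1$.

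Next I would evaluate $L_p$ at the proposed roots. Fix $j\in\{1,\dots,k\}$, put $\theta_j=\tfrac{\pi}{2}-\tfrac{j\pi}{p}$, and note $2i\sin\tfrac{j\pi}{p}=2i\cos\theta_j$. Substituting $x=2i\cos\theta_j$ into $z^2-xz-1=0$, one checks that $a=ie^{-i\theta_j}$ and $b=ie^{i\theta_j}$ are its two roots, because $ab=i^2=-1$ and $a+b=2i\cos\theta_j=x$. Hence, by the Binet formula,
\[
L_p\!\left(2i\sin\tfrac{j\pi}{p}\right)=a^p+b^p=i^p\bigl(e^{-ip\theta_j}+e^{ip\theta_j}\bigr)=2i^p\cos(p\theta_j)=2i^p\cos\!\left(\tfrac{(p-2j)\pi}{2}\right)=0,
\]
the last step because $p-2j$ is odd. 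Using $\theta=\tfrac{\pi}{2}+\tfrac{j\pi}{p}$ instead gives $2i\cos\theta=-2i\sin\tfrac{j\pi}{p}$ and, in the same way, $L_p\!\left(-2i\sin\tfrac{j\pi}{p}\right)=0$. Since $0<\tfrac{j\pi}{p}<\pi$ we have $\sin\tfrac{j\pi}{p}\ne 0$, so each of the numbers $\pm 2i\sin\tfrac{j\pi}{p}$ with $1\le j\le k$ is a root of $L_p(x)/x$.

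Finally I would check that these $2k=p-1$ numbers are pairwise distinct and invoke the degree bound. Because $0<\tfrac{\pi}{p}<\dots<\tfrac{k\pi}{p}=\tfrac{(p-1)\pi}{2p}<\tfrac{\pi}{2}$ and $\sin$ is strictly increasing on $\bigl(0,\tfrac{\pi}{2}\bigr)$, the values $\sin\tfrac{\pi}{p},\dots,\sin\tfrac{k\pi}{p}$ are distinct and positive; attaching both signs yields $p-1$ distinct purely imaginary numbers. A nonzero polynomial of degree $p-1$ has at most $p-1$ roots, so $T$ is precisely the zero set of $L_p(x)/x$, as claimed. I do not expect a genuine obstacle here; the only delicate points are choosing the correct branch of the square root when solving the quadratic (one must verify $a+b=x$ and $ab=-1$ for the stated $a,b$, not merely that they are \emph{some} pair of solutions) and observing that $\deg\bigl(L_p(x)/x\bigr)=p-1$ exactly, which is what makes the counting argument tight. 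It is worth noting that primality of $p$ plays no role in this lemma: the description of the roots holds for every odd $p$.
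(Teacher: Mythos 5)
Your argument is correct. Note that the paper does not actually prove this lemma---it is quoted from Hoggatt and Bicknell \cite{hoggattRoots} without proof---so there is no in-paper argument to compare against; what you have written supplies the standard derivation. Your route (Binet formula $L_p(x)=a(x)^p+b(x)^p$, the substitution $x=2i\cos\theta_j$ with $\theta_j=\tfrac{\pi}{2}\mp\tfrac{j\pi}{p}$ so that $a=ie^{-i\theta_j}$, $b=ie^{i\theta_j}$ satisfy $a+b=x$ and $ab=-1$, giving $L_p(x)=2i^p\cos(p\theta_j)=0$ since $p\mp 2j$ is odd, followed by distinctness of the $p-1$ values and the degree count $\deg\bigl(L_p(x)/x\bigr)=p-1$) is exactly the classical one, and you handle the two delicate points correctly: verifying Vieta's relations for the chosen pair $a,b$ rather than just exhibiting solutions, and checking $x\mid L_p(x)$ for odd $p$ so that the quotient is a genuine polynomial of degree $p-1$. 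Your closing observation that only the oddness of $p$ is used is also accurate; primality enters later in the paper (irreducibility of $L_p(x)/x$), not in the description of its roots.
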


\begin{theorem} \label{JCLucasIrreducibleCapelli} 
Let $p>2$ be a prime number and let $T=\{\tau_1, \dots, \tau_{p-1}\}$ be the set of  roots of $L_p(x)/x$, where $L_p(x)$ is the Lucas polynomial. The polynomial  
$\alpha\Lt{p}(x)/d(x)=\Lt{p}(x)/p_1(x)$ is irreducible over $\mathbb{Q}$ if and only if $d(x)^2-g(x)\tau^2$ is irreducible over $\mathbb{Q}\left(\tau^2\right)$ for some $\tau\in T$.
\end{theorem}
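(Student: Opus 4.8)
The plan is to run the Lucas-type analogue of the proof of Theorem~\ref{JCFibonacciIrreducibleCapelli}, with the Fibonacci polynomial $F_p$ replaced by the reduced Lucas polynomial $L_p(x)/x$. Applying Lemma~\ref{genHogattLucasLemma} to the classical Lucas polynomials (the case $d(x)=x$, $g(x)=1$, $\alpha=1$) gives $L_p(x)=\sum_{i=0}^{(p-1)/2}\frac{p}{p-i}\binom{p-i}{i}x^{p-2i}$; since $p$ is odd every exponent $p-2i$ here is odd, so $L_p(x)\in x\,\mathbb{Z}[x^2]$ and there is a unique $M_p\in\mathbb{Z}[x]$ of degree $(p-1)/2$ with $x\,M_p(x^2)=L_p(x)$, namely $M_p(w)=\sum_{i=0}^{(p-1)/2}\frac{p}{p-i}\binom{p-i}{i}w^{(p-1)/2-i}$. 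Comparing this with Lemma~\ref{genHogattLucasLemma} for the given sequence $\Lt{n}$ and reindexing the sum yields the polynomial identity $\Lt{p}(x)/p_1(x)=\alpha\Lt{p}(x)/d(x)=g(x)^{(p-1)/2}M_p\!\bigl(d(x)^2/g(x)\bigr)$ (equivalently one obtains it, as in Theorem~\ref{JCFibonacciIrreducibleCapelli}, from $\alpha\Lt{p}(z)=g(z)^{p/2}L_p(d(z)/\sqrt{g(z)})$, valid for all but finitely many $z$).

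Next I would record two facts about $M_p$. First, $M_p$ is irreducible over $\mathbb{Q}$: by Bergum and Hoggatt \cite{HoggattBergum} the polynomial $L_p(x)/x=M_p(x^2)$ is irreducible over $\mathbb{Q}$ for $p>2$ prime, and any nontrivial factorization $M_p=AB$ would yield the nontrivial factorization $M_p(x^2)=A(x^2)B(x^2)$. Second, the roots of $M_p$ are exactly the numbers $\tau^2$ with $\tau\in T$: by Lemma~\ref{FibonacciIrreducibleRoots} the roots of $L_p(x)/x$ form the set $T$, so $M_p(\tau^2)=L_p(\tau)/\tau=0$ for each $\tau\in T$, and the $(p-1)/2$ values $\{\tau^2:\tau\in T\}$ (equal to $-4\sin^2(j\pi/p)$ for $1\le j\le(p-1)/2$) are pairwise distinct, matching $\deg M_p=(p-1)/2$. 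In particular $\mathbb{Q}(\tau^2)=\mathbb{Q}(\gamma)$ for $\gamma$ a root of $M_p$.

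Then I would apply Lemma~\ref{JCCapelliLemma} with $f=M_p$ (note $M_p(0)=p\neq0$), $h(x)=d(x)^2$, and the auxiliary polynomial in that lemma taken to be $g(x)$; its hypothesis $\deg h>\deg g$ reads $2\deg d>\deg g$, which holds by the standing assumption $\deg d>\deg g$ together with $\deg d\ge1$. The polynomial produced by Lemma~\ref{JCCapelliLemma} is exactly $g(x)^{(p-1)/2}M_p\!\bigl(d(x)^2/g(x)\bigr)=\Lt{p}(x)/p_1(x)$, so the lemma gives: $\Lt{p}(x)/p_1(x)$ is irreducible over $\mathbb{Q}$ if and only if $M_p$ is irreducible over $\mathbb{Q}$ and $d(x)^2-g(x)\gamma$ is irreducible over $\mathbb{Q}(\gamma)$ for a root $\gamma$ of $M_p$. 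By the previous paragraph the first condition always holds, and writing $\gamma=\tau^2$ with $\tau\in T$ gives the criterion in the statement; since $M_p$ is irreducible, all its roots are Galois conjugate, so the condition holds for one $\tau\in T$ exactly when it holds for all, which is the existential quantifier appearing in the statement.

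I do not expect a genuine obstacle here: the argument is a faithful transcription of the Fibonacci-type proof. The only points that truly use hypotheses or outside input are the degree inequality $\deg(d(x)^2)>\deg g(x)$ needed to invoke Lemma~\ref{JCCapelliLemma} (which is where $\deg d>\deg g$ enters) and the irreducibility of $L_p(x)/x$, from which the irreducibility of $M_p$ is deduced by the substitution $x\mapsto x^2$; everything else is bookkeeping with Lemma~\ref{genHogattLucasLemma} and Lemma~\ref{FibonacciIrreducibleRoots}.
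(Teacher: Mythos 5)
Your proposal is correct and follows essentially the same route as the paper: the paper's proof is precisely the instruction to repeat the argument of Theorem~\ref{JCFibonacciIrreducibleCapelli} with Lemma~\ref{genHogattLucasLemma} in place of Lemma~\ref{genHogattLemma} and $L_p(x)/x$ in place of $F_p(x)$, which is what you carry out (with more detail on $M_p(0)=p$, the distinctness of the roots $\tau^2$, and the quantifier over $T$).
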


\begin{proof} This proof is similar to the proof of Theorem \ref{JCFibonacciIrreducibleCapelli} (see also the proof of Theorem  \ref{JCLucasIrreducibleCapelli2m}). Using Lemma \ref{genHogattLucasLemma} instead of Lemma \ref{genHogattLemma}. Then setting $J{p}(x):=L_p(x)/x$ and then replacing $F_p(x)$ 
in the proof of Theorem \ref{JCFibonacciIrreducibleCapelli} by $J{p}(x)$, we obtain the desired result. 
\end{proof}

\begin{corollary} \label{LucasIrreducibleCapelli} Let $g(x)=1$ and let 
$T=\{\tau_1, \dots, \tau_{p-1}\}$ be the set of  roots of  $L_p(x)/x$ where $L_p(x)$ is the Lucas polynomial. The polynomial $\alpha\Lt{p}(x)/d(x)=\Lt{p}(x)/p_1(x)$ is irreducible in $\mathbb{Q}$ if and only if 
$d(x)-\tau$ is irreducible over $\mathbb{Q}(\tau)$  for some $\tau \in \Gamma$.
\end{corollary}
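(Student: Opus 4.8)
The plan is to derive Corollary~\ref{LucasIrreducibleCapelli} from Theorem~\ref{JCLucasIrreducibleCapelli} exactly as Corollary~\ref{FibonacciIrreducibleCapelli} was derived from Theorem~\ref{JCFibonacciIrreducibleCapelli}, specializing the general ``Capelli with $h$ and $g$'' statement to the case $g(x)=1$. First I would invoke Theorem~\ref{JCLucasIrreducibleCapelli}: with $T=\{\tau_1,\dots,\tau_{p-1}\}$ the set of roots of $L_p(x)/x$, the polynomial $\Lt{p}(x)/p_1(x)$ is irreducible over $\mathbb{Q}$ if and only if $d(x)^2-g(x)\tau^2$ is irreducible over $\mathbb{Q}(\tau^2)$ for some $\tau\in T$. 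Setting $g(x)=1$ reduces this to: $\Lt{p}(x)/p_1(x)$ is irreducible over $\mathbb{Q}$ iff $d(x)^2-\tau^2$ is irreducible over $\mathbb{Q}(\tau^2)$ for some $\tau\in T$.

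Next I would factor $d(x)^2-\tau^2=(d(x)-\tau)(d(x)+\tau)$ over $\mathbb{Q}(\tau)$, and argue that $d(x)^2-\tau^2$ is irreducible over $\mathbb{Q}(\tau^2)$ if and only if $d(x)-\tau$ is irreducible over $\mathbb{Q}(\tau)$. The key field-theoretic point is that $[\mathbb{Q}(\tau):\mathbb{Q}(\tau^2)]=2$: indeed $\tau=\pm 2i\sin(j\pi/p)$ is not real while $\tau^2=-4\sin^2(j\pi/p)$ is real (for $p>2$, $\tau\neq 0$), so $\tau\notin\mathbb{Q}(\tau^2)$ and $\tau$ is a root of the degree-two polynomial $z^2-\tau^2$ over $\mathbb{Q}(\tau^2)$. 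Hence $\mathbb{Q}(\tau)$ and $\mathbb{Q}(\tau^2)$ and the two conjugate factors $d(x)\mp\tau$ are related exactly as in the statement of Lemma~\ref{JCCapelliLemma} with $h(x)=d(x)$: a degree count gives that $d(x)^2-\tau^2$ is irreducible over $\mathbb{Q}(\tau^2)$ iff $[\mathbb{Q}(\theta):\mathbb{Q}(\tau^2)]=2\deg d$, where $\theta$ is a root of $d(x)-\tau$, and this happens iff $[\mathbb{Q}(\theta):\mathbb{Q}(\tau)]=\deg d$, i.e. iff $d(x)-\tau$ is irreducible over $\mathbb{Q}(\tau)$. (Alternatively, one may apply Lemma~\ref{JCCapelliLemma} directly with $K=\mathbb{Q}(\tau^2)$, $f(x)=x^2-\tau^2$, $g(x)\equiv 1$, $h(x)=d(x)$, and $\gamma=\tau$, noting $f$ is irreducible over $K$ by the non-reality argument above, which collapses the two-variable hypothesis to the single condition that $d(x)-\tau$ be irreducible over $K(\tau)=\mathbb{Q}(\tau)$.)

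Combining the two equivalences yields the corollary. I would also note the harmless typo-level point that the set over which one ranges is $T$ (not $\Gamma$), matching the hypothesis, and that $\mathbb{Q}(\tau^2)$ depends on $j$ only through $\sin^2(j\pi/p)$ while $\mathbb{Q}(\tau)$ further adjoins $i$; this is consistent with $L_p(x)/x\in\mathbb{Q}[x^2]$, mirroring the role of $G_p$ in Theorem~\ref{JCFibonacciIrreducibleCapelli}.

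The main obstacle is the clean verification that $[\mathbb{Q}(\tau):\mathbb{Q}(\tau^2)]=2$, i.e. that $\tau\notin\mathbb{Q}(\tau^2)$; everything else is either a direct quotation of Theorem~\ref{JCLucasIrreducibleCapelli} or a routine degree-multiplicativity computation. The non-reality of $\tau$ versus the reality of $\tau^2$, available from Lemma~\ref{FibonacciIrreducibleRoots}, settles this immediately, so in fact the argument is short; the only real care needed is bookkeeping of which field each factor lives over and making sure the $p>2$ hypothesis is used to guarantee $\tau\neq 0$ so that the factorization $d(x)^2-\tau^2=(d(x)-\tau)(d(x)+\tau)$ is into two genuinely distinct (and $\mathbb{Q}(\tau^2)$-conjugate) factors.
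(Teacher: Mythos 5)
Your proof is correct, but it takes a genuinely different route from the paper's. The paper's own argument is a one\-step application of the classical Capelli lemma (Lemma~\ref{CapelliLemma}): when $g(x)=1$ the relation $\alpha\Lt{p}(x)=L_p(d(x))$ exhibits $\Lt{p}(x)/p_1(x)$ as the composition of $L_p(x)/x$ (irreducible for $p>2$ prime) with $d(x)$, and Capelli immediately converts irreducibility of that composition into irreducibility of $d(x)-\tau$ over $\mathbb{Q}(\tau)$. You instead start from Theorem~\ref{JCLucasIrreducibleCapelli}, specialize to $g(x)=1$, and then reduce the condition that $d(x)^2-\tau^2$ be irreducible over $\mathbb{Q}\left(\tau^2\right)$ to the condition that $d(x)-\tau$ be irreducible over $\mathbb{Q}(\tau)$, using that $z^2-\tau^2$ is irreducible over $\mathbb{Q}\left(\tau^2\right)$ because $\tau^2=-4\sin^2\frac{j\pi}{p}$ is real while $\tau$ is purely imaginary and nonzero (Lemma~\ref{FibonacciIrreducibleRoots}), so that $\left[\mathbb{Q}(\tau):\mathbb{Q}\left(\tau^2\right)\right]=2$; your degree count, or equivalently the direct application of Lemma~\ref{JCCapelliLemma} with $f(z)=z^2-\tau^2$, $h=d$ and the lemma's $g\equiv 1$, is sound (the standing hypothesis $\deg d>\deg g\geq 0$ guarantees $\deg d\geq 1$, as the lemma requires). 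The paper's route buys brevity and independence from the theorem; your route buys an honest derivation of the corollary as the $g(x)=1$ case of Theorem~\ref{JCLucasIrreducibleCapelli}, making explicit the field\-theoretic bridge $\mathbb{Q}\left(\tau^2\right)\subsetneq\mathbb{Q}(\tau)$ that the paper leaves implicit. Your remark that the final ``$\tau\in\Gamma$'' in the statement should read ``$\tau\in T$'' is also correct.
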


\begin{proof} Consider $\Lt{p}(x)/d(x)$ with $g(x)=1$ and $d(x)$ a polynomial that satisfies that $d(x)-\tau$ is irreducible over $\mathbb{Q}(\tau)$  for some $\tau \in T $. Note that $\Lt{p}(x)/d(x)$ is the composition of $L_p(x)/x$ with $d(x)$, i.e.  $\Lt{p}(x)=(L_p \circ d)(x)$. 
This and Lemma \ref{CapelliLemma}, imply that $\Lt{p}(x)/d(x)$ is irreducible if and only if $L_p(x)/x$ is irreducible. 
\end{proof}

\begin{proposition} \label{JCSpecialCoroCaseLucas} If $g(x)=-qh(x)^2$, where  $h(x)\in\mathbb{Z}[x]$ and $q\equiv 3 \bmod 4$, then $\Lt{q}(x)/p_1(x)$ is reducible over $\mathbb{Q}$.
\end{proposition}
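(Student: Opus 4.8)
The plan is to reduce $\Lt{q}(x)/p_1(x)$ to a homogenized cyclotomic polynomial, invoke the classical Aurifeuillian factorization, and then use the hypothesis $g=-qh^2$ to render that factorization defined over $\mathbb{Q}$. First I would use the Binet formula \eqref{bineformulados}: writing $a=a(x),b=b(x)$ for the roots of $z^{2}-d(x)z-g(x)$, one has $a+b=d=\alpha p_1$, $ab=-g$, and $\alpha\Lt{q}=a^{q}+b^{q}$, so that since $q$ is odd
\[
\frac{\Lt{q}(x)}{p_1(x)}=\frac{a^{q}+b^{q}}{a+b}=b^{\,q-1}\,\frac{(a/b)^{q}+1}{(a/b)+1}=b^{\,q-1}\,\Phi_{2q}\!\left(\frac ab\right),
\]
because $(Y^{q}+1)/(Y+1)=\Phi_{2q}(Y)$ for $q$ an odd prime. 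Thus $\Lt{q}/p_1$ is, up to the homogenizing factor $b^{\,q-1}$, the $2q$-th cyclotomic polynomial evaluated at $a/b$.

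Next I would invoke the Aurifeuillian factorization: because $q\equiv 3\bmod 4$, there are self-reciprocal monic polynomials $C(Y),D(Y)\in\mathbb{Z}[Y]$ of degrees $(q-1)/2$ and $(q-3)/2$ with $\Phi_{2q}(Y)=C(Y)^{2}-qY\,D(Y)^{2}$ (for $q=3$ this reads $\Phi_6(Y)=(Y+1)^{2}-3Y$). Substituting $Y=a/b$ and multiplying through by $b^{\,q-1}$ gives
\[
\frac{\Lt{q}(x)}{p_1(x)}=P(a,b)^{2}-qab\,Q(a,b)^{2},\qquad P(a,b):=b^{(q-1)/2}C(a/b),\quad Q(a,b):=b^{(q-3)/2}D(a/b),
\]
with $P,Q\in\mathbb{Z}[a,b]$ homogeneous of degrees $(q-1)/2$ and $(q-3)/2$. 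Since $C$ and $D$ are self-reciprocal of exactly these degrees, $P$ and $Q$ are symmetric in $a$ and $b$; hence by the fundamental theorem on symmetric polynomials they are integer polynomials in $a+b=d(x)$ and $ab=-g(x)$. Naming the resulting polynomials $\mathcal A(x),\mathcal B(x)\in\mathbb{Z}[x]$ and using $ab=-g$ we obtain the identity $\Lt{q}(x)/p_1(x)=\mathcal A(x)^{2}+q\,g(x)\,\mathcal B(x)^{2}$.

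Finally, imposing $g(x)=-q\,h(x)^{2}$ turns $q\,g$ into $-(qh)^{2}$, so
\[
\frac{\Lt{q}(x)}{p_1(x)}=\mathcal A(x)^{2}-\bigl(q\,h(x)\,\mathcal B(x)\bigr)^{2}=\bigl(\mathcal A-q h\,\mathcal B\bigr)\bigl(\mathcal A+q h\,\mathcal B\bigr),
\]
a product of two polynomials in $\mathbb{Z}[x]\subset\mathbb{Q}[x]$. Because $\deg g<\deg d$, the top-degree term of $\mathcal A$ is $d^{(q-1)/2}$ while $\deg(q h\,\mathcal B)<\tfrac{q-1}{2}\deg d$, so each factor has degree $\tfrac{q-1}{2}\deg d\ge\tfrac{q-1}{2}\ge1$ and is non-constant; hence $\Lt{q}(x)/p_1(x)$ is reducible over $\mathbb{Q}$. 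As a check, $q=3$ gives $\mathcal A=d$, $\mathcal B=1$, $\Lt{3}/p_1=d^{2}-9h^{2}=(d-3h)(d+3h)$, and $q=7$ gives $\mathcal A=d^{3}$, $\mathcal B=d^{2}+g$, $\Lt{7}/p_1=(d^{3}-7hd^{2}+49h^{3})(d^{3}+7hd^{2}-49h^{3})$, matching the examples preceding this proposition.

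The main obstacle is Step~2 — the Aurifeuillian factorization $\Phi_{2q}(Y)=C(Y)^{2}-qY D(Y)^{2}$ with $C,D$ self-reciprocal — and this is exactly where $q\equiv 3\bmod 4$ is essential: for $q\equiv1\bmod4$ the sign becomes $+qY$, equivalently $+\bigl(qh\sqrt{-1}\bigr)^{2}$, and no factorization over $\mathbb{Q}$ results. This factorization can be established directly by splitting the roots $-\zeta_q^{\,j}$ of $\Phi_{2q}$ according to the Legendre symbol $\bigl(\tfrac jq\bigr)$ and using the quadratic Gauss sum $\sum_{t=1}^{q-1}\bigl(\tfrac tq\bigr)\zeta_q^{\,t}=\pm\sqrt{-q}$; alternatively, one writes $\Lt{q}/p_1=h^{\,q-1}G\!\left(d^{2}/h^{2}\right)$ for a degree-$\tfrac{q-1}{2}$ polynomial $G$, verifies via the same Gauss sum that each root $q\gamma$ of $G$ equals $\bigl(2\sqrt q\cos\tfrac{k\pi}{2q}\bigr)^{2}$ and hence is a square inside $\mathbb{Q}(\gamma)=\mathbb{Q}(\zeta_q)\cap\mathbb{R}$, and then concludes from the Capelli Lemma~\ref{CapelliLemma} that $G(Z^{2})$, and therefore $\Lt{q}/p_1$, is reducible.
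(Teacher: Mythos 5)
Your argument is correct, but it takes a genuinely different route from the paper. The paper invokes its Capelli-type criterion (Theorem \ref{JCLucasIrreducibleCapelli}) to reduce the claim to showing that $d(x)^2+q\tau_j^2h(x)^2$ splits over $\mathbb{Q}(\tau_j^2)$, which amounts to proving $\sqrt{q}\,i\,\tau_j=\pm2\sqrt{q}\sin\frac{j\pi}{q}\in\mathbb{Q}\big(\cos\frac{2j\pi}{q}\big)$; this is done with the quadratic Gauss sum $\sum_{n=0}^{q-1}e^{2\pi i n^2/q}=i\sqrt{q}$ and a count of conjugates. You instead bypass the irreducibility criterion entirely: writing $\Lt{q}/p_1=(a^q+b^q)/(a+b)=b^{q-1}\Phi_{2q}(a/b)$ and invoking the classical Aurifeuillian identity $\Phi_{2q}(Y)=C(Y)^2-qYD(Y)^2$ (valid precisely because $q\equiv 3\bmod 4$, with $C,D$ palindromic of degrees $\frac{q-1}{2},\frac{q-3}{2}$), you descend via symmetry in $a,b$ to an identity $\Lt{q}/p_1=\mathcal A^2+qg\mathcal B^2$ in $\mathbb{Z}[x]$, which the hypothesis $g=-qh^2$ turns into an explicit difference of squares; the degree estimate (using $\deg g<\deg d$, hence $\deg h<\deg d$) shows both factors are non-constant. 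What your approach buys is constructiveness and independence from Theorem \ref{JCLucasIrreducibleCapelli}: it produces the actual factors, matching the paper's examples for $q=3,7,11$. What it costs is the reliance on the Aurifeuillian factorization with the stated palindromy and degree properties, which you should either cite precisely (it is classical, e.g.\ Lucas/Schinzel/Brent) or prove via the Gauss-sum argument you sketch at the end --- note that this sketch is essentially the same Gauss-sum input the paper uses, so the two proofs rest on the same arithmetic fact about $q\equiv 3\bmod 4$. Two small points to make explicit: $q$ should be taken prime (as in the paper's context) for $(Y^q+1)/(Y+1)=\Phi_{2q}(Y)$ to hold, and the passage from symmetric polynomials in $a,b$ to $\mathbb{Z}[x]$ should mention that it is exactly the palindromy of $C,D$ that guarantees $P,Q$ are symmetric, as you do.
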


\begin{proof}
By Theorem \ref{JCLucasIrreducibleCapelli}, we only need to show that $d(x)^2+qh(x)^2\tau_j^2$ is reducible over $\mathbb{Q}\left(\tau_j^2\right)$ for some 
$\tau_j \in T$ (see Lemma \ref{FibonacciIrreducibleRoots}). Since 
$$d(x)^2+q\tau_j^2h(x)^2=\big(d(x)-\sqrt{q}i\tau_j h(x)\big)\big(d(x)+\sqrt{q}i\tau_j h(x)\big),$$ 
it suffices to show that $\sqrt{q}i\tau_j\in\mathbb{Q}\left(\tau_j^2\right)$. From Lemma \ref{FibonacciIrreducibleRoots}, we conclude that $\sqrt{q}i\tau_j=\pm 2\sqrt{q}\sin\frac{j \pi}{q}$. So, $\tau_j^2=-4\sin^2\frac{j \pi}{q}$. The fact that $\cos\frac{2j \pi}{q}=1-2\sin^2\frac{j \pi}{q}$, implies that $\mathbb{Q}\big(\tau_j^2\big)=\mathbb{Q}\big(\cos\frac{2j \pi}{q}\big)$. 
 
Since $\gcd(2j,q)=1$,  we have $\big[\mathbb{Q}\big(\cos\frac{2j \pi}{q}\big):\mathbb{Q}\big]=\frac{q-1}{2}$. So, we only need to show that 
$\sqrt{q}\sin\frac{j \pi}{q}\in \mathbb{Q}\big(\cos\frac{2j \pi}{q}\big)$. 
Thus, we know that $\cos\frac{2j \pi}{q}\in \mathbb{Q}\big(\sqrt{q}\sin\frac{j \pi}{q}\big)$, and therefore we just need to show that 
$\big[\mathbb{Q}\big(\sqrt{q}\sin\frac{j \pi}{q}\big):\mathbb{Q}\big]\leq\frac{q-1}{2}$. 

Since $q\equiv 3\bmod 4$, we have the quadratic Gauss sum
\begin{equation*}
\sum_{n=0}^{q-1}e^{\frac{2 \pi i n^2}{q}}=i\sqrt{q}.
\end{equation*}
So, 
\begin{equation*}
\sqrt{q}\sin\frac{j \pi}{q}=\frac{\big(e^{\frac{ij \pi}{q}}-e^{-\frac{ij \pi}{q}}\big)}{2}\sum_{n=0}^{q-1}e^{\frac{2 \pi in^2}{q}}=\frac{1}{2}\sum_{n=0}^{q-1}\big(\zeta^{n^2+j/2}-\zeta^{n^2-j/2}\big),
\end{equation*}
where $\zeta=e^{\frac{2i \pi}{q}}$. Since $\sin\frac{j \pi}{q}=\sin\frac{(q-j) \pi}{q}$, we may assume that $j$ is even. Then since the conjugates of $\zeta$ over $\mathbb{Q}$ are $\zeta^2,\zeta^3,\ldots,\zeta^{q-1}$, it therefore follows that the conjugates of $\sqrt{q}\sin\frac{j \pi}{q}$ over $\mathbb{Q}$ are
\begin{equation*}
\frac{1}{2}\sum_{n=0}^{q-1}\left(\zeta^{m\left(n^2+j/2\right)}-\zeta^{m\left(n^2-j/2\right)}\right),\quad m=1,2,\ldots,q-1.
\end{equation*}
Also notice, however, that
\begin{equation*}
\frac{1}{2}\sum_{n=0}^{q-1}\left(\zeta^{m\left(n^2+j/2\right)}-\zeta^{m\left(n^2-j/2\right)}\right)=\frac{1}{2}\sum_{n=0}^{q-1}\left(\zeta^{m(q-1)\left(n^2+j/2\right)}-\zeta^{m(q-1)\left(n^2-j/2\right)}\right)
\end{equation*}
for all $m\in\mathbb{Z}$ since $\zeta$ and $\zeta^{q-1}$ are complex conjugates. Hence, the number of conjugates of $\sqrt{q}\sin\frac{2j \pi}{q}$ over $\mathbb{Q}$ is at most $\frac{q-1}{2}$, and so $\big[\mathbb{Q}\big(\sqrt{q}\sin\frac{2j \pi}{q}\big):\mathbb{Q}\big]\leq\frac{q-1}{2}$. This completes the proof. 
\end{proof}

This previous proposition in combination with Proposition \ref{genHogattLucasProp2} give rise to infinite families of GFP of Lucas type that have special behavior. 
For example, if $d(x)=x$ and $g(x)=3$, we have that the Lucas polynomials with these conditions satisfy that $\Lt{3}(x)/d(x)=x^2-9=(x-3) (x+3)$ and that $\Lt{p}(x)/d(x)$ 
is irreducible for every prime $p\ne 3$; if $d(x)=x$ and $g(x)=7$, we have that the Lucas polynomials with these conditions satisfy that   
$\Lt{7}(x)/d(x)= x^6-49 x^4+686 x^2-2401=(x^3-7 x^2+49)(x^3+7 x^2-49)$ and that $\Lt{p}(x)/d(x)$ is irreducible for every prime $p\ne 7$; 
if $d(x)=x$ and $g(x)=11$, we have that the Lucas polynomials with these conditions satisfy that   
$\Lt{11}(x)/d(x)=x^{10}-121 x^8+5324 x^6-102487 x^4+805255 x^2-1771561=$ $(x^5-11 x^4+363 x^2-1331 x+1331)(x^5+11 x^4-363 x^2-1331 x-1331)$ 
and that $\Lt{p}(x)/d(x)$ is irreducible for every prime $p\ne 11$.

Since $d(x)=\alpha p_{1}(x)$, we have $\Lt{p}(x)/d(x)=\Lt{p}(x)/p_1(x)$ in  Theorem \ref{JCLucasIrreducibleCapelli} and Corollary \ref{LucasIrreducibleCapelli} when $\alpha =1$. 

\begin{theorem} \label{JCLucasIrreducibleCapelli2m}
Let $m\in\mathbb{N}$ and $R=\{\rho_1, \dots, \rho_{2^m}\}$ be the set of  roots of $L_{2^m}(x)$. The polynomial $\Lt{2^m}(x)$ is irreducible over $\mathbb{Q}$ if and only if $d(x)^2-g(x)\rho^2$ is irreducible over $\mathbb{Q}\left(\rho^2\right)$ for some $\rho\in R$.
\end{theorem}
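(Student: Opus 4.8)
The plan is to follow the proof of Theorem~\ref{JCFibonacciIrreducibleCapelli} almost verbatim, with the classical Lucas polynomial $L_{2^m}(x)$ playing the role of $F_p(x)$, Lemma~\ref{genHogattLucasLemma} playing the role of Lemma~\ref{genHogattLemma}, and the generalized Capelli Lemma~\ref{JCCapelliLemma} supplying the last step. First I would record what is needed about $L_{2^m}(x)$. Since $2^m$ is even we have $L_{2^m}(-x)=L_{2^m}(x)$, so $L_{2^m}(x)\in\mathbb{Z}[x^2]$; let $M(x)\in\mathbb{Z}[x]$ be the polynomial with $M(x^2)=L_{2^m}(x)$, so that $\deg M=2^{m-1}$. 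By the result of Bergum and Hoggatt \cite{HoggattBergum} that $v_n(x,1)=L_n(x)$ is irreducible over $\mathbb{Q}$ precisely when $n$ is a power of $2$, the polynomial $L_{2^m}(x)$ is irreducible over $\mathbb{Q}$; a nontrivial factorization of $M(x)$ would pull back to one of $L_{2^m}(x)=M(x^2)$, so $M(x)$ is irreducible over $\mathbb{Q}$ as well. Moreover the recursion gives $L_{2^m}(0)=2\neq 0$, so $M(0)\neq 0$ and $0\notin R$; as $L_{2^m}(x)$ is separable with roots occurring in pairs $\pm\rho$, the roots of $M(x)$ are exactly the $2^{m-1}$ distinct numbers $\rho^2$ with $\rho\in R$, and these form a single conjugacy class over $\mathbb{Q}$.

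Next I would rewrite $\Lt{2^m}(x)$ in a form ready for Lemma~\ref{JCCapelliLemma}. Applying Lemma~\ref{genHogattLucasLemma} with $n=2^m$ and substituting $j=2^{m-1}-i$ turns $\alpha\Lt{2^m}(x)=\sum_i \frac{n}{n-i}\binom{n-i}{i}d(x)^{n-2i}g(x)^i$ into $\sum_{j=0}^{2^{m-1}} c_j\, d(x)^{2j}g(x)^{2^{m-1}-j}$ with $c_j=\frac{n}{2^{m-1}+j}\binom{2^{m-1}+j}{2^{m-1}-j}$; this is a genuine polynomial because $0\le j\le 2^{m-1}$, and it equals $g(x)^{2^{m-1}}M\!\left(d(x)^2/g(x)\right)$, since the same computation with $d(x)=x$ and $g(x)=1$ gives $L_{2^m}(x)=\sum_j c_j x^{2j}=M(x^2)$. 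Hence, for every $z$ with $g(z)\neq 0$,
\[
\alpha\,\Lt{2^m}(z)=g(z)^{2^{m-1}}\,M\!\left(\frac{d(z)^2}{g(z)}\right),
\]
the exact analogue of the identity $\Ft{p}(z)=g(z)^{(p-1)/2}G_p(d(z)^2/g(z))$ used in Theorem~\ref{JCFibonacciIrreducibleCapelli}.

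I would then apply Lemma~\ref{JCCapelliLemma} over $K=\mathbb{Q}$ with $f(x)=M(x)$, $h(x)=d(x)^2$, and the recurrence polynomial $g(x)$ in the role of ``$g$'': here $\deg h=2\deg d>\deg d>\deg g$ and $f(0)=M(0)\neq 0$, so the hypotheses hold, and the polynomial $p(x)$ produced by that lemma is exactly $\sum_j c_j d(x)^{2j}g(x)^{2^{m-1}-j}=\alpha\Lt{2^m}(x)$. Therefore $\alpha\Lt{2^m}(x)$ — equivalently $\Lt{2^m}(x)$, since $\alpha$ is a nonzero integer constant — is irreducible over $\mathbb{Q}$ if and only if $M(x)$ is irreducible over $\mathbb{Q}$ (which we have shown) and $h(x)-g(x)w=d(x)^2-g(x)w$ is irreducible over $\mathbb{Q}(w)$ for some, equivalently every by conjugacy, root $w$ of $M$; writing $w=\rho^2$ with $\rho\in R$ gives the stated equivalence. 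The two points needing genuine care are the index shift identifying $\alpha\Lt{2^m}(x)$ with $g(x)^{2^{m-1}}M(d(x)^2/g(x))$ and the irreducibility of $L_{2^m}(x)$ (hence of $M(x)$); neither is a real obstacle, the first being the Lucas-type counterpart of a computation already carried out in Section~\ref{FibonacciSection} and the second being the theorem of Bergum and Hoggatt quoted above.
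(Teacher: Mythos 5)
Your proposal is correct and follows essentially the same route as the paper: the paper's proof likewise writes $\alpha\Lt{2^m}(z)=g(z)^{2^{m-1}}G_{2^m}\left(d(z)^2/g(z)\right)$ for the polynomial $G_{2^m}$ with $G_{2^m}(x^2)=L_{2^m}(x)$ (your $M$), invokes the irreducibility of $L_{2^m}(x)$, and concludes via Lemma~\ref{JCCapelliLemma}. Your write-up simply makes explicit a few details the paper leaves implicit (the Bergum--Hoggatt citation, the verification that $M(0)\neq 0$ and $\deg d(x)^2>\deg g(x)$, and the ``for some, equivalently every'' point).
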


\begin{proof}
For all $z\in\mathbb{C}$ such that $g(z)\neq 0$, we can deduce that
\begin{equation*}
\alpha\Lt{2^m}(z)=g(z)^{2^{m-1}}L_{2^m}\left(\frac{d(z)}{g(z)^{1/2}}\right).
\end{equation*}
From Lemma \ref{genHogattLucasLemma} we know that  $L_{2^m}(x)\in\mathbb{Z}[x^2]$. Let $G_{2^m}(x)\in\mathbb{Z}[x]$  
such that $G_{2^m}(x^2)=L_{2^m}(x)$. Since $L_{2^m}(x)$ is irreducible over $\mathbb{Q}$, it follows that $G_{2^m}(x)$ is irreducible over $\mathbb{Q}$.  
For all $z\in\mathbb{C}$ such that $g(z)\neq 0$, we deduce
\begin{equation*}
\alpha\Lt{2^m}(z)=g(z)^{2^{m-1}}G_{2^m}\left(\frac{d(z)^2}{g(z)}\right). 
\end{equation*}
 The conclusion follows from Lemma \ref{JCCapelliLemma}.
\end{proof}

Computer experimentation shows that there are many other polynomials $d(x)$ and $g(x)$ such  $\Lt{q}(x)/p_1(x)$ and  $\Ft{q}(x)$ are irreducible for primes greater than $2$ and $\Lt{2^k}(x)$ is irreducible over $\mathbb{Q}$. 

 \section{Acknowledgement}

The first author was partially supported by the Citadel Foundation. The second author was supported by a postdoctoral fellowship at the University of Calgary.

\noindent  MSC 2010:
Primary 11B39; Secondary 11B83.

\end{document}